\definecolor{orchid}{rgb}{0.85, 0.44, 0.84}
\begin{document}
\begin{frontmatter}

\titledata{Perfect matchings, Hamiltonian cycles and edge-colourings in a class of cubic graphs}{}           

\authordata{Mari\'{e}n Abreu}
{Dipartimento di Matematica, Informatica ed Economia\\ Universit\`{a} degli Studi della Basilicata, Italy}
{marien.abreu@unibas.it}{}

\authordata{John Baptist Gauci}
{Department of Mathematics, University of Malta, Malta}
{john-baptist.gauci@um.edu.mt}
{}

\authordatatwo{Domenico Labbate}{domenico.labbate@unibas.it}{}{Federico Romaniello}{federico.romaniello@unibas.it}{}
{Dipartimento di Matematica, Informatica ed Economia\\ Universit\`{a} degli Studi della Basilicata, Italy}

\authordata{Jean Paul Zerafa}
{Department of Technology and Entrepreneurship Education\\University of Malta, Malta;\\ 
Department of Computer Science, Faculty of Mathematics, Physics and Informatics\\ Comenius University, Mlynsk\'{a} Dolina, 842 48 Bratislava, Slovakia}
{zerafa.jp@gmail.com}
{The author was partially supported by VEGA 1/0813/18 and VEGA 1/0743/21.}

\keywords{Cubic graph, perfect matching, Hamiltonian cycle, 3-edge-colouring}
\msc{05C15, 05C45, 05C70}

\begin{abstract}
A graph $G$ has the Perfect-Matching-Hamiltonian property (PMH-property) if for each one of its perfect matchings, there is another perfect matching of $G$ such that the union of the two perfect matchings yields a Hamiltonian cycle of $G$. The study of graphs that have the PMH-property, initiated in the 1970s by Las Vergnas and H\"{a}ggkvist, combines three well-studied properties of graphs, namely matchings, Hamiltonicity and edge-colourings. In this work, we study these concepts for cubic graphs in an attempt to characterise those cubic graphs for which every perfect matching corresponds to one of the colours of a proper 3-edge-colouring of the graph. We discuss that this is equivalent to saying that such graphs are even-2-factorable (E2F), that is, all 2-factors of the graph contain only even cycles. The case for bipartite cubic graphs is trivial, since if $G$ is bipartite then it is E2F. Thus, we restrict our attention to non-bipartite cubic graphs. A sufficient, but not necessary, condition for a cubic graph to be E2F is that it has the PMH-property. The aim of this work is to introduce an infinite family of E2F non-bipartite cubic graphs on two parameters, which we coin \emph{papillon graphs}, and determine the values of the respective parameters for which these graphs have the PMH-property or are just E2F. We also show that no two papillon graphs with different parameters are isomorphic.
\end{abstract}

\end{frontmatter}

\section{Introduction}

Let $G$ be a simple connected graph of even order with vertex set $V(G)$ and edge set $E(G)$. A \emph{$k$-factor} of $G$ is a $k$-regular spanning subgraph of $G$ (not necessarily connected). Two very well-studied concepts in graph theory are \emph{perfect matchings} and \emph{Hamiltonian cycles}, where the former is the edge set of a $1$-factor and the latter is a connected $2$-factor of a graph. For $t\geq 3$, a \emph{cycle} of length $t$ (or a $t$-cycle), denoted by $C_{t}=(v_{1}, \ldots, v_{t})$, is a sequence of mutually distinct vertices $v_1,v_2,\ldots,v_{t}$ with corresponding edge set $\{v_{1}v_{2}, \ldots, v_{t-1}v_{t}, v_{t}v_{1}\}$. For definitions not explicitly stated here we refer the reader to \cite{Diestel}.
A graph $G$ admitting a perfect matching is said to have the \emph{Perfect-Matching-Hamiltonian property} (for short the \emph{PMH-property}) if for every perfect matching $M$ of $G$ there exists another perfect matching $N$ of $G$ such that the edges of $M\cup N$ induce a Hamiltonian cycle of $G$. For simplicity, a graph admitting this property is said to be PMH. This property was first studied in the 1970s by Las Vergnas \cite{LasVergnas} and H\"aggkvist \cite{Haggkvist}, and for more recent results about the PMH-property we suggest the reader to \cite{pmhlinegraphs,rook,ThomassenEtAl,cpcq,accordions}.
In \cite{ThomassenEtAl}, a property stronger than the PMH-property is studied: the Pairing-Hamiltonian property, for short the PH-property. Before proceeding to the definition of this property, we first define what a pairing is. For any graph $G$, $K_{G}$ denotes the complete graph on the same vertex set $V(G)$ of $G$. A perfect matching of $K_{G}$ is said to be a \emph{pairing} of $G$, and a graph $G$ is said to have the \emph{\mbox{Pairing-Hamiltonian} property} if every pairing $M$ of $G$ can be extended to a Hamiltonian cycle $H$ of $K_{G}$ such that $E(H)-M\subseteq E(G)$. Clearly, a graph having the PH-property is also PMH, although the converse is not necessarily true. Amongst other results, the authors of \cite{ThomassenEtAl} show that the only cubic graphs admitting the PH-property are the complete graph $K_{4}$, the complete bipartite graph $K_{3,3}$, and the cube $\mathcal{Q}_{3}$. However, this does not mean that these are the only three cubic graphs admitting the PMH-property. For instance, all cubic 2-factor Hamiltonian graphs (all 2-factors of such a graph form a Hamiltonian cycle) are PMH (see for example \cite{StarProduct, m1freg, charm1freg, m1fcub, m1fdetper}). 

If a cubic graph $G$ is PMH, then every perfect matching of $G$ corresponds to one of the colours of a (proper) $3$-edge-colouring of the graph, and we say that every perfect matching can be extended to a $3$-edge-colouring. This is achieved by alternately colouring the edges of the Hamiltonian cycle containing a predetermined perfect matching using two colours, and then colouring the edges not belonging to the Hamiltonian cycle using a third colour.
However, there are cubic graphs which are not PMH but have every one of their perfect matchings that can be extended to a $3$-edge-colouring (see for example Figure \ref{FigureC6K2}). The following proposition characterises all cubic graphs for which every one of their perfect matchings can be extended to a 3-edge-colouring of the graph.

\begin{figure}[h]
\centering
\includegraphics[width=0.3\textwidth]{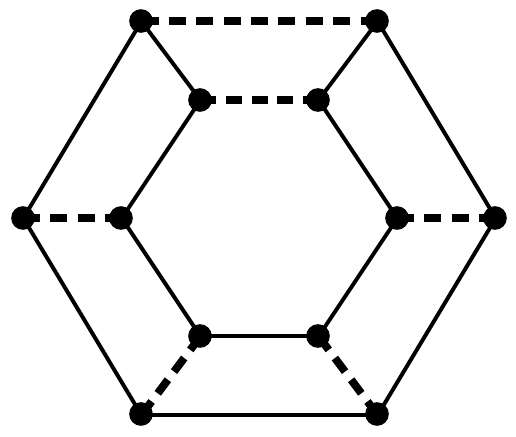}
\caption{The bold dashed edges can be extended to a proper 3-edge-colouring but not to a Hamiltonian cycle}
\label{FigureC6K2}
\end{figure}

\begin{proposition}\label{prop e2f}
Let $G$ be a cubic graph admitting a perfect matching. Every perfect matching of $G$ can be extended to a $3$-edge-colouring of $G$ if and only if all $2$-factors of $G$ contain only even cycles.
\end{proposition}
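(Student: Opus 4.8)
The statement is an equivalence: every perfect matching of a cubic graph $G$ extends to a 3-edge-colouring if and only if every 2-factor of $G$ consists only of even cycles. Let me think about both directions.

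First, the key observation: in a cubic graph, the complement of a perfect matching $M$ is a 2-factor, and conversely the complement of a 2-factor is a perfect matching. So there's a natural bijection between perfect matchings and 2-factors. Given a perfect matching $M$, let $F = E(G) \setminus M$ be the complementary 2-factor.

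Now, "extending $M$ to a 3-edge-colouring" means: colour the edges of $M$ with one colour (say colour 3), and then properly 2-colour the remaining edges $F$ with colours 1 and 2. Since $G$ is cubic and $M$ is a perfect matching, at each vertex exactly one edge is coloured 3, and the other two edges belong to $F$. For the colouring to be proper, those two $F$-edges at each vertex must receive the two distinct colours 1 and 2. But $F$ is a 2-factor, i.e. a disjoint union of cycles, and properly 2-colouring a single cycle with colours alternating is possible if and only if that cycle is even. So a proper 2-colouring of $F$ exists precisely when every cycle in $F$ is even.

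So for the forward direction: suppose every perfect matching extends to a 3-edge-colouring. Take any 2-factor $F$; its complement $M = E(G) \setminus F$ is a perfect matching, which by hypothesis extends to a 3-edge-colouring. In that colouring, the edges of $F$ must be properly 2-coloured (the third colour is forced onto $M$), forcing every cycle of $F$ to be even.

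Let me double-check the subtle point. When $M$ extends to a 3-edge-colouring, is colour 3 necessarily exactly $M$? Not quite — the colouring restricted to $M$ might use several colours. Hmm. Actually "extended to a 3-edge-colouring" (from the definition in the text) means $M$ is monochromatic: $M$ is one of the three colour classes. So $M$ is colour 3, and the two other classes partition $F$, giving a proper 2-colouring of $F$. Good, the argument holds. Conversely, if every 2-factor has only even cycles, then given any $M$, the complement $F$ is a 2-factor of even cycles, each of which 2-colours properly, yielding the 3-edge-colouring. The main thing to get right is this bijection and the even-cycle/2-colourability equivalence; both directions then follow symmetrically, and I expect no serious obstacle — the crux is just articulating that "extending to a 3-edge-colouring" forces $M$ to be a colour class and hence $F$ to be 2-colourable.
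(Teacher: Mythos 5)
Your proposal is correct and follows essentially the same route as the paper's proof: both directions use the complementation bijection between perfect matchings and 2-factors together with the fact that a cycle is properly 2-edge-colourable if and only if it is even. Your extra remark that ``extending $M$'' forces $M$ to be a full colour class is a worthwhile clarification, but it does not change the argument.
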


\begin{proof}
Let $F$ be a $2$-factor of $G$, and let $M$ be the perfect matching $E(G)-E(F)$. Since $M$ can be extended to a 3-edge-colouring of $G$, $F$ can be 2-edge-coloured, and hence $F$ does not contain any odd cycles. Conversely, let $M'$ be a perfect matching of $G$, and let $F'$ be its complementary 2-factor, that is, $E(F')=E(G)-M'$. Since $F'$ contains only even cycles, $M'$ can be extended to a $3$-edge-colouring, by assigning a first colour to all of its edges and then alternately colouring the edges of the 2-factor $F'$ using another two colours.
\end{proof}

We shall call graphs in which all $2$-factors consist only of even cycles as \emph{even-$2$-factorable} graphs, denoted by E2F for short. In particular, from Proposition \ref{prop e2f}, if a cubic graph $G$ has the PMH-property, then it is also E2F. As in the proof of Proposition \ref{prop e2f}, in the sequel, given a perfect matching $M$ of a cubic graph $G$, the 2-factor obtained after deleting the edges of $M$ from $G$ is referred to as the \emph{complementary 2-factor} of $M$.

If a cubic graph is bipartite, then trivially, each of its perfect matchings can be extended to a $3$-edge-colouring, since it is E2F. But what about non-bipartite cubic graphs? In Table \ref{table jan}, we give the number of non-isomorphic non-bipartite 3-connected cubic graphs (having girth at least $4$) such that each one of their perfect matchings can be extended to a $3$-edge-colouring. As is the case of \emph{snarks} (bridgeless cubic graphs which are not $3$-edge-colourable), these seem to be difficult to find, as one can notice after comparing these numbers to the total number of non-isomorphic $3$-edge-colourable (Class I) non-bipartite $3$-connected cubic graphs having girth at least $4$, also given in Table \ref{table jan}. The numbers shown in this table were obtained thanks to a computer check done by Jan Goedgebeur, and the data is sorted according to the cyclic connectivity of the graphs considered. We remark that E2F cubic graphs having girth $3$ can be obtained by applying a star product between an E2F
cubic graph of smaller order and the complete graph $K_{4}$---this has been investigated further by the last two authors in \cite{betwixt}. This is the reason why only graphs having girth at least 4 are considered in this work. More results on star products (also known in the literature as $3$-cut connections) in cubic graphs can be found in \cite{StarProduct, m1freg, HS, charm1freg, m1fcub, m1fdetper}. 

\begin{table}[h]
\centering
\begin{tabular}{ccccccccccc}
\addlinespace[-\aboverulesep]
\cmidrule[\heavyrulewidth]{2-10}
      &       & \multicolumn{4}{c}{Cyclic connectivity} & &  \multicolumn{2}{c}{Total no. of graphs}\\
\cmidrule{3-6}\cmidrule{8-9}
 & & $3$             & $4$   & $5$  & $6$ & & E2F & Class I & \shortstack{ratio \\ {\tiny E2F : Class I}} &\\
\cmidrule[\heavyrulewidth]{2-10}
\multirow{8}{*}{\rotatebox[origin=c]{90}{Number of vertices}}
&$8$          &   /    & $1$   & /    & /   & & $1$& $1$& $100\%$&
\\
&$10$         &   /    &  /    & /    & /   & & $0$& $3$& $0\%$&\\
&$12$         & $2$    & $5$   & $2$  & /   & & $9$& $17$& $52.94\%$&\\
&$14$         & $2$    & $2$   & $2$  & /   & & $6$& $92$& $6.52\%$&\\
&$16$         & $35$   & $56$  & $4$  & /   & & $95$& $716$& $13.27\%$&\\
&$18$         & $84$   & $21$  & $9$  & /   & & $114$& $7343$& $1.55\%$&\\
&$20$         & $926$  & $655$ & $15$ & $2$ & & $1598$& $93946$& $1.70\%$&\\
&$22$         & $2978$ & $331$ & $17$ & $6$ & & $3332$& $1400203$& $0.24\%$&\\
\cmidrule[\heavyrulewidth]{2-10}
\end{tabular}
\caption{The number of non-isomorphic non-bipartite $3$-connected cubic graphs with girth at least $4$ which are E2F and Class I}
\label{table jan}
\end{table}

A complete characterisation of which cubic graphs are PMH is still elusive, so considering the Class I non-bipartite cubic graphs having the property that each one of their perfect matchings can be extended to a $3$-edge-colouring may look presumptuous. As far as we know this property and the corresponding characterisation problem were never considered before and tackling the following problem seems a reasonable step to take.

\begin{problem}\label{problem pm3ec}
Characterise the Class I non-bipartite cubic graphs for which each one of their perfect matchings can be extended to a $3$-edge-colouring, that is, are E2F.
\end{problem}

We remark that although the PMH-property is an appealing property in its own right, Problem \ref{problem pm3ec} continues to justify its study in relation to cubic graphs. Observe that in the family of cubic graphs, whilst snarks are not $3$-edge-colourable, even-$2$-factorable graphs are quite the opposite being ``very much 3-edge-colourable", since the latter can be 3-edge-coloured by assigning a colour to one of its perfect matchings, and then alternately colour the edges of the complementary 2-factor.

\subsection{Cycle permutation graphs}

Consider two disjoint cycles each of length $t$, referred to as the first and second $t$-cycles and denoted by $(x_{1},\ldots, x_{t})$ and $(y_{1},\ldots, y_{t})$, respectively. Let $\sigma$ be a permutation of the symmetric group $\mathcal{S}_{t}$ on the $t$ symbols $\{1,\ldots, t\}$. The \emph{cycle permutation graph corresponding to $\sigma$} is the cubic graph obtained by considering the first and second $t$-cycles in which $x_{i}$ is adjacent to $y_{\sigma(i)}$, where $\sigma(i)$ is the image of $i$ under the permutation $\sigma$.

\begin{figure}[h]
\centering
\includegraphics[width=0.6\textwidth]{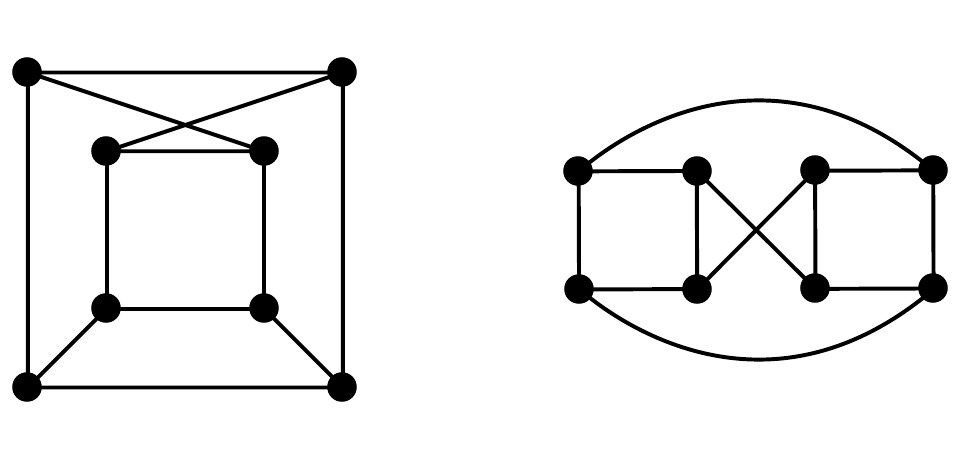}
\caption{Two different drawings of the smallest non-bipartite E2F cubic graph}
\label{figure mobius}
\end{figure}

The smallest non-bipartite cubic graph (from Table \ref{table jan}) which is E2F is in fact a cycle permutation graph corresponding to $\sigma=(1\,\,\,2)\in\mathcal{S}_{4}$, where $\sigma(1)=2, \sigma(2)=1, \sigma(3)=3$, and $\sigma(4)=4$ (see Figure \ref{figure mobius}). This shows that the edges between the vertices of the first and second $4$-cycles of the cycle permutation graph are $x_{1}y_{2}, x_{2}y_{1}, x_{3}y_{3}, x_{4}y_{4}$. In what follows we shall denote permutations in cycle notation and, for simplicity, fixed points shall be suppressed. With the help of Wolfram Mathematica, in Table \ref{table e2f vs pmh} we provide the number of non-isomorphic non-bipartite cycle permutation graphs up to 20 vertices which are PMH or just E2F. Recall that PMH cubic graphs are also E2F, and so, PMH cycle permutation graphs should be searched for from amongst the cycle permutation graphs which are E2F. We also remark that, in the sequel, cycle permutation graphs with total number of vertices equal to twice an odd number are not considered because, in this case, the first and second cycles form a 2-factor consisting of two odd cycles, and so they are trivially not E2F.

\begin{table}[h]
\centering
\begin{tabular}{cccc}
\addlinespace[-\aboverulesep]
\cmidrule[\heavyrulewidth]{2-4}
 & & E2F             & PMH  \\
\cmidrule[\heavyrulewidth]{2-4}
\multirow{4}{*}{\rotatebox[origin=c]{90}{No. of vertices}}
&$8$          & $1$    & $0$  \\
&$12$         & $5$    & $1$  \\
&$16$         & $28$    & $2$  \\
&$20$         & $175$    & $0$  \\
\cmidrule[\heavyrulewidth]{2-4}
\end{tabular}
\caption{The number of non-isomorphic non-bipartite cycle permutation graphs with girth at least 4 which are E2F and PMH}
\label{table e2f vs pmh}
\end{table}

This work is a first structured attempt at tackling Problem \ref{problem pm3ec}. We give an infinite family of non-bipartite cycle permutation graphs which admit the PMH-property or are just E2F. In Section \ref{section papillon graphs}, we generalise the smallest cubic graph which is E2F into a family of non-bipartite cycle permutation graphs, namely papillon graphs $\mathcal{P}_{r,\ell}$ (for $r,\ell\in\mathbb{N}$),  whose smallest member $\mathcal{P}_{1,1}$ is, in fact, the graph in Figure \ref{figure mobius}. We show that papillon graphs are E2F for all values of $r$ and $\ell$ (Theorem \ref{theorem papillon e2f}) and PMH if and only if both $r$ and $\ell$ are even (Theorem \ref{theorem papillon pmh} and Theorem \ref{pmh unbalanced}).

\section{Papillon graphs}\label{section papillon graphs}
Let $[n]=\{1,\ldots,n\}$, for some positive integer $n$.

\begin{definition}
Let $r$ and $\ell$ be two positive integers. The \emph{papillon graph} $\mathcal{P}_{r,\ell}$ is the graph on $4r+4\ell$ vertices such that $V(\mathcal{P}_{r,\ell})=\{u_{i},v_{i}:i\in[2r+2\ell]\}$, where:

\begin{enumerate}[(i)]
\item $(u_{1}, u_{2},\ldots, u_{2r+2\ell})$ is a cycle of length $2r+2\ell$;
\item $u_{i}$ is adjacent to $v_{i}$, for each $i\in[2r+2\ell]$; and
\item the adjacencies between the vertices $v_{i}$, for $i\in[2r+2\ell]$, form a cycle of length $2r+2\ell$ given by the edge set \begin{align*}
\{v_{2i-1}v_{2i} : i\in [r+\ell]\}&\cup
\{v_{2i-1}v_{2i+2} : i\in [r+\ell-1]\setminus\{s\}\}\\
&\cup \{v_{2}v_{2s+2}, v_{2s-1}v_{2r+2\ell-1}\},
\end{align*}
where $s=\min\{r,\ell\}$.
\end{enumerate}

\noindent Clearly, the two papillon graphs $\mathcal{P}_{r,\ell}$ and $\mathcal{P}_{\ell,r}$ are isomorphic, and henceforth, without loss of generality, we shall tacitly assume that $r\leq \ell$. The papillon graph $\mathcal{P}_{r,\ell}$ for $r\geq 2$ is depicted in Figure \ref{figure papillon definition}. When $r$ and $\ell$ are equal, say $r=\ell=n$, the papillon graph $\mathcal{P}_{r,\ell}$ is said to be \emph{balanced}, and simply denoted by $\mathcal{P}_{n}$ (see, for example, Figure \ref{figure h3}). Otherwise, $\mathcal{P}_{r,\ell}$ is said to be \emph{unbalanced} (see, for example, Figure \ref{figure unbalanced}). The $(2r+2\ell)$-cycle induced by the sets of vertices $\{u_{i}:i\in[2r+2\ell]\}$ is referred to as the \emph{outer-cycle}, whilst the $(2r+2\ell)$-cycle induced by the vertices $\{v_{i}:i\in[2r+2\ell]\}$ is referred to as the \emph{inner-cycle}. The edges on these two $(2r+2\ell)$-cycles are said to be the \emph{outer-edges} and \emph{inner-edges} accordingly, whilst the edges $u_{i}v_{i}$ are referred to as \emph{spokes}. The edges $u_{1}u_{2r+2\ell}$, $v_{2r-1}v_{2r+2\ell-1}$, $v_{2}v_{2r+2}$, $u_{2r}u_{2r+1}$ are denoted by $a,b,c,d$, respectively, and we shall also denote the set $\{a,b,c,d\}$ by $\mathcal{X}$. The set $\mathcal{X}$ is referred to as the \emph{principal 4-edge-cut} of $\mathcal{P}_{r,\ell}$.
\end{definition}

\begin{figure}[h]
\centering
\includegraphics[width=.75\textwidth]{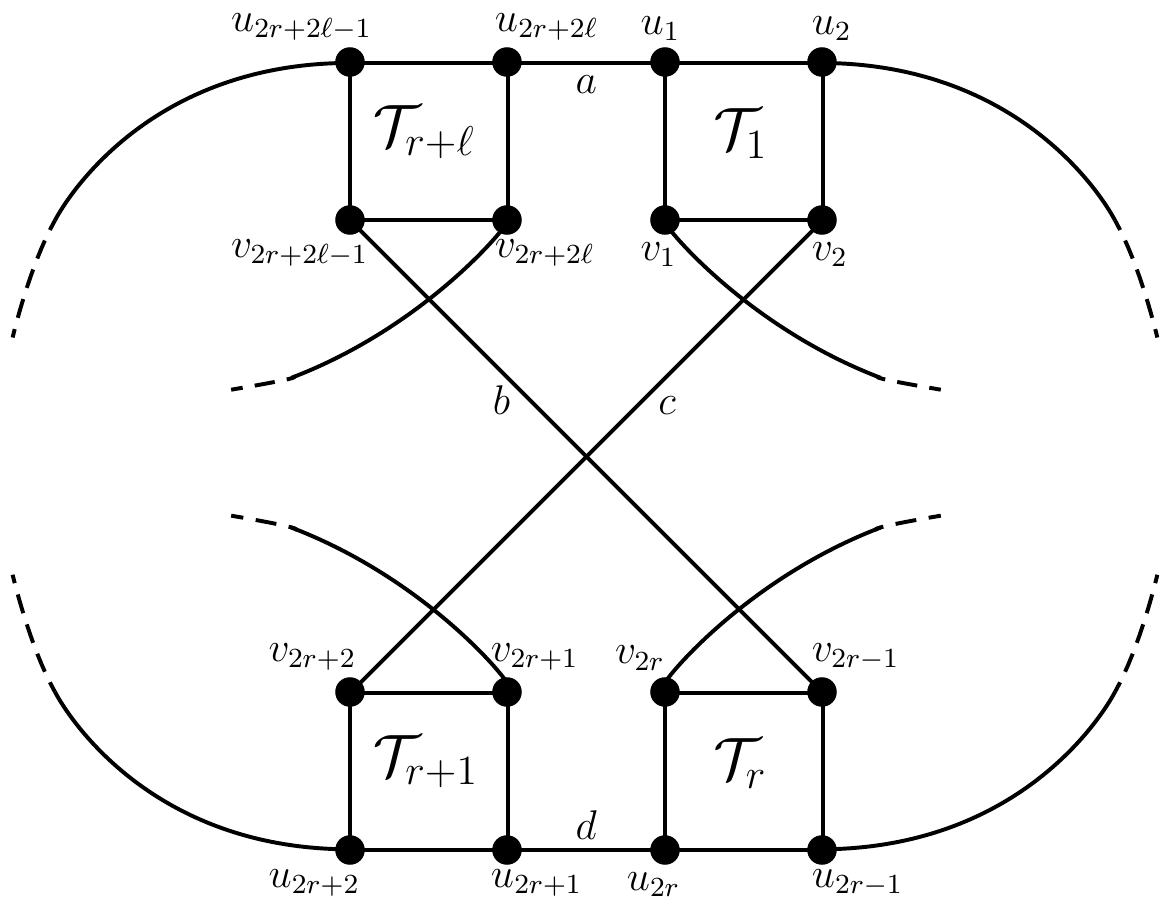}
\caption{The papillon graph $\mathcal{P}_{r,\ell}$, for $\ell\geq r\geq 2$, and the $4$-pole $\mathcal{T}_{j}$, for $j\in [r+\ell]$}
\label{figure papillon definition}
\end{figure}

\noindent The graph in Figure \ref{figure mobius} is actually the smallest (balanced) papillon graph $\mathcal{P}_{1}$. 
In general, since $\{u_{i}:i\in[2r+2\ell]\}$ and $\{v_{i}:i\in[2r+2\ell]\}$ induce two disjoint $(2r+2\ell)$-cycles in $\mathcal{P}_{r,\ell}$, and since every vertex belonging to the outer-cycle is adjacent to exactly one vertex on the inner-cycle,  there exists an isomorphism $\pi$ between the papillon graph $\mathcal{P}_{r,\ell}$ and a cycle permutation graph corresponding to some $\sigma\in\mathcal{S}_{2r+2\ell}$ satisfying $\pi(x_i)=u_i$ and  $\pi(y_i)=v_{\sigma^{-1}(i)}$, for each $i \in [2r+2\ell]$. In fact, the papillon graph $\mathcal{P}_{r,\ell}$ is the cycle permutation graph, with $(u_{1}, \ldots, u_{2r+2\ell})$ as the first cycle, corresponding to the permutation:
\begin{itemize}
\item $\sigma_{1,\ell}:=(3\,\,\,4)\ldots(2\ell+1\,\,\,2\ell+2)$, with fixed points $1$ and $2$, when $\ell\geq 1$; 
\item $\sigma_{2,2}:=(1\,\,\,2)(3\,\,\,4)(5\,\,\,7)(6\,\,\,8)$;
\item $\sigma_{r,3}:=(1\,\,\,2)\ldots(2r-1\,\,\,2r)(2r+1\,\,\,2r+5)(2r+2\,\,\,2r+6)$, with fixed points $2r+3$ and $2r+4$, when $r\in\{2,3\}$; and
\item $\sigma_{r,\ell}:=(1\,\,\,2)\ldots(2r-1\,\,\,2r)(2r+1\,\,\,2r+2\ell-1)(2r+2\,\,\,2r+2\ell)(2r+3\,\,\,2r+2\ell-3)(2r+4\,\,\,2r+2\ell-2)\ldots(\alpha\,\,\,\beta)$, when $\ell\geq r\geq 4$, where $(\alpha\,\,\,\beta)=(2r+\ell\,\,\,2r+\ell+2)$ if $\ell$ is even, and $(\alpha\,\,\,\beta)=(2r+\ell-1\,\,\,2r+\ell+3)$ if $\ell$ is odd.
\end{itemize} 

We remark that when $r>1$, the above permutations has no fixed points when $\ell$ is even, but, when $\ell$ is odd, $2r+\ell$ and $2r+\ell+1$ are fixed points, and thus, in this case, $x_{2r+\ell}$ is adjacent to $y_{2r+\ell}$, and $x_{2r+\ell+1}$ is adjacent to $y_{2r+\ell+1}$ in $\mathcal{P}_{r,\ell}$. Note that since $\sigma_{r,\ell}$ is an involution for all positive integers $r$ and $\ell$, the isomorphism $\pi$ mentioned above can be rewritten as follows: $\pi(x_i)=u_i$ and  $\pi(y_i)=v_{\sigma(i)}$, for each $i \in [2r+2\ell]$. The papillon graph $\mathcal{P}_{r,\ell}$ admits a natural automorphism $\psi$ which exchanges the two cycles, given by $\psi(u_i)=v_{\sigma_{r,\ell}(i)}$ and $\psi(v_i)=u_{\sigma_{r,\ell}(i)}$, for each $i\in[2r+2\ell]$. In fact, the function $\psi$ is clearly bijective. Moreover, it maps edges of the outer-cycle to edges of the inner-cycle (and vice-versa), and maps spokes to spokes, since the edges $u_{i}v_{i}$ are mapped to $u_{\sigma_{r,\ell}(i)}v_{\sigma_{r,\ell}(i)}$.

\begin{figure}[h]
\centering
\includegraphics[width=.75\textwidth]{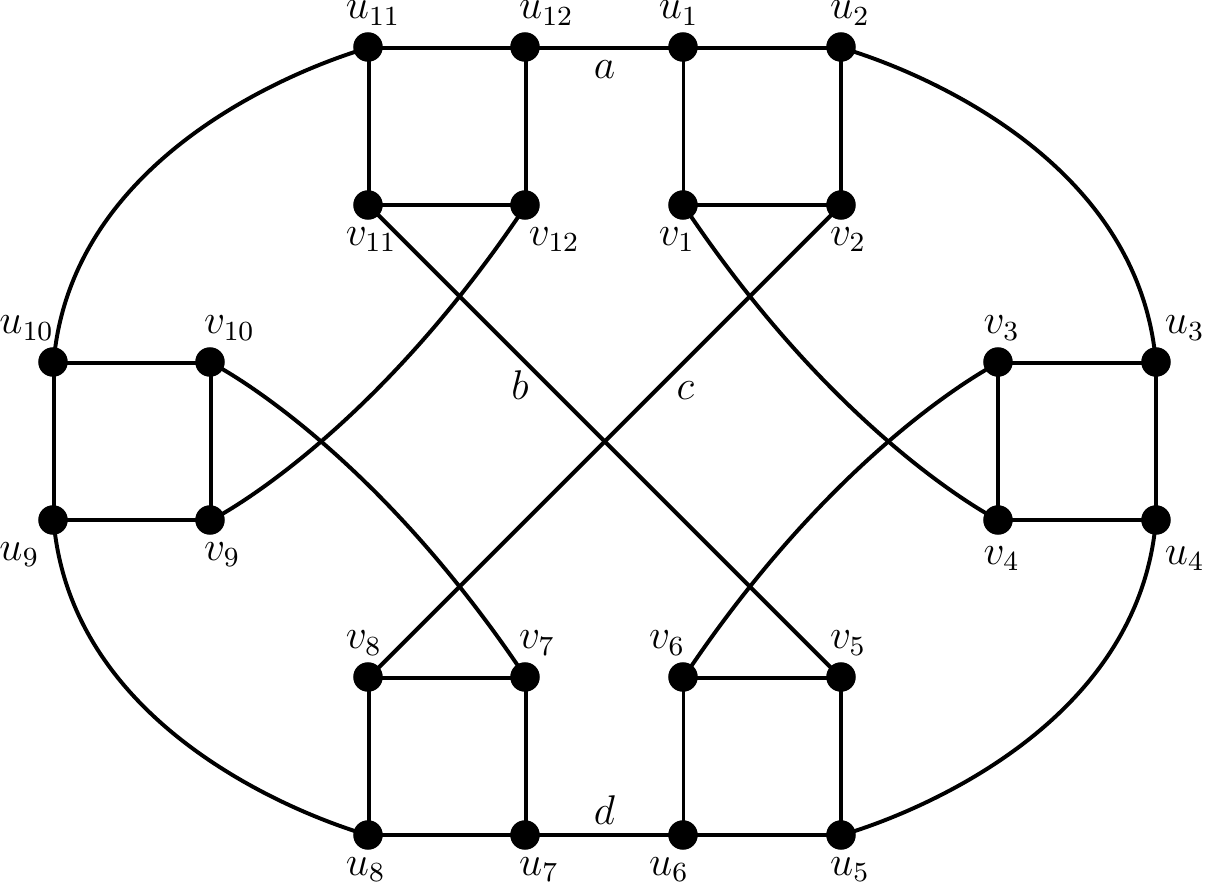}
\caption{The balanced papillon graph $\mathcal{P}_{3}$ on 24 vertices}
\label{figure h3}
\end{figure}

Before proceeding, we introduce multipoles which generalise the notion of graphs. This will become useful when describing papillon graphs. A \emph{multipole} $\mathcal{Z}$ consists of a set of vertices $V(\mathcal{Z})$ and a set of generalised edges such that each generalised edge is either an edge in the usual sense (that is, it has two endvertices) or a semiedge.  A \emph{semiedge} is a generalised edge having exactly one endvertex. The set of semiedges of $\mathcal{Z}$ is denoted by $\partial\mathcal{Z}$ whilst the set of edges of $\mathcal{Z}$ having two endvertices is denoted by $E(\mathcal{Z})$. Two semiedges are \emph{joined} if they are both deleted and their endvertices are made adjacent. A \emph{$k$-pole} is a multipole with $k$ semiedges. A perfect matching $M$ of a $k$-pole $\mathcal{Z}$ is a subset of generalised edges of $\mathcal{Z}$ such that every vertex of $\mathcal{Z}$ is incident with exactly one generalised edge of $M$. In what follows, we shall construct papillon graphs by joining together semiedges of a number of multipoles. In this sense, given a perfect matching $M$ of a graph $G$, and a multipole $\mathcal{Z}$ used as a building block to construct $G$, we shall say that $M$ contains a semiedge $e$ of the multipole $\mathcal{Z}$, if $M$ contains the edge in $G$ obtained by joining $e$ to another semiedge in the process of constructing $G$.

The $4$-pole $\mathcal{Z}$ with vertex set $\{z_{1}, z_{2},z_{3},z_{4}\}$,  such that $E(\mathcal{Z})$ induces the $4$-cycle $(z_{1}, z_{2},z_{3},z_{4})$ and with exactly one semiedge incident to each of its vertices is referred to as a \emph{$C_{4}$-pole} (see Figure \ref{figure c4pole}). For each $i\in[4]$, let the semiedge incident to $z_{i}$ be denoted by $f_{i}$. The semiedges $f_{1}$ and $f_{2}$ are referred to as the \emph{upper left semiedge} and the \emph{upper right semiedge} of $\mathcal{Z}$, respectively. On the other hand, the semiedges $f_{3}$ and $f_{4}$ are referred to as the \emph{lower left semiedge} and the \emph{lower right semiedge} of $\mathcal{Z}$, respectively (see Figure \ref{figure c4pole}).

\begin{figure}[h]
\centering
\includegraphics[width=0.8\textwidth]{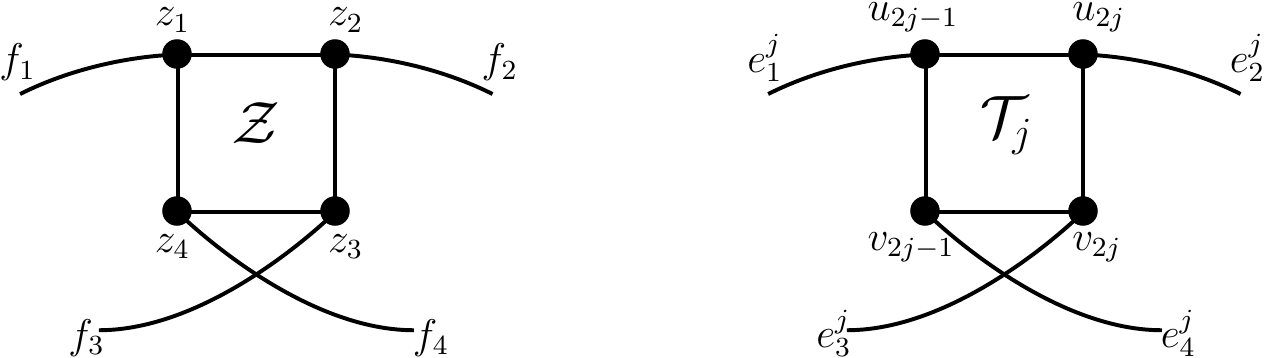}
\caption{A $C_{4}$-pole $\mathcal{Z}$ and the $4$-pole $\mathcal{T}_{j}$ in $\mathcal{P}_{r,\ell}$}
\label{figure c4pole}
\end{figure}

For some integer $n\geq 1$, let $\mathcal{Z}_{1},\ldots, \mathcal{Z}_{n}$ be $n$ copies of the above $C_4$-pole $\mathcal{Z}$. For each $j\in[n]$, let $V(\mathcal{Z}_{j})=\{z_{1}^{j}, z_{2}^{j},z_{3}^{j},z_{4}^{j}\}$, and let $f_1^j,f_2^j,f_3^j,f_4^j$ be the semiedges of $\mathcal{Z}_{j}$ respectively incident to $z_{1}^{j}, z_{2}^{j},z_{3}^{j},z_{4}^{j}$ such that $f_{1}^{j}$ and $f_{2}^{j}$ are the upper left and upper right semiedges of $\mathcal{Z}_{j}$, whilst $f_{3}^{j}$ and $f_{4}^{j}$ are the lower left and lower right semiedges of $\mathcal{Z}_{j}$. A \emph{chain of $C_{4}$-poles} of length $n\geq 2$, is the $4$-pole obtained by respectively joining $f_{2}^{j}$ and $f_{4}^{j}$ (upper and lower right semiedges of $\mathcal{Z}_{j}$) to $f_{1}^{j+1}$ and $f_{3}^{j+1}$ (upper and lower left semiedges of $\mathcal{Z}_{j+1}$), for every $j\in[n-1]$. When $n=1$, a chain of $C_{4}$-poles of length $1$ is just a $C_{4}$-pole. For simplicity, we shall refer to a chain of $C_{4}$-poles of length $n$, as a \emph{$n$-chain of $C_{4}$-poles}, or simply a \emph{$n$-chain}. The semiedges $f_{1}^{1}$ and $f_{3}^{1}$ (similarly, $f_{2}^{n}$ and $f_{4}^{n}$) are referred to as the upper left and lower left (respectively, upper right and lower right) semiedges of the $n$-chain.
A chain of $C_{4}$-poles of any length has exactly four semiedges. For simplicity, when we say that $e_{1},e_{2},e_{3},e_{4}$ are the four semiedges of a chain $\mathcal{Z}'$ of $C_{4}$-poles (possibly of length $1$), we mean that $e_{1}$ and $e_{2}$ are respectively the upper left and upper right semiedges of $\mathcal{Z}'$, whilst $e_{3}$ and $e_{4}$ are respectively the lower left and lower right semiedges of the same chain $\mathcal{Z}'$ (see Figure \ref{figure chain}). The semiedges $e_{1}$ and $e_{2}$ (similarly, $e_{3}$ and $e_{4}$) are referred to collectively as the \emph{upper semiedges} (respectively, \emph{lower semiedges}) of $\mathcal{Z}'$. In a similar way, the semiedges $e_{1}$ and $e_{3}$ (similarly, $e_{2}$ and $e_{4}$) are referred to collectively as the \emph{left semiedges} (respectively, \emph{right semiedges}) of $\mathcal{Z}'$.

\begin{figure}[h]
\centering
\includegraphics[width=.7\textwidth]{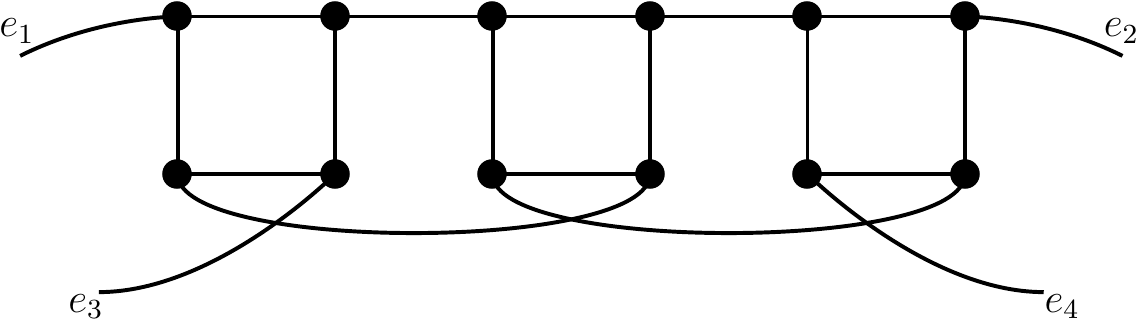}
\caption{A chain of $C_{4}$-poles of length 3 having semiedges $e_{1},e_{2},e_{3},e_{4}$}
\label{figure chain}
\end{figure}

In order to construct the papillon graph $\mathcal{P}_{r,\ell}$ using $C_4$-poles as building blocks, for each $j\in[r+\ell]$, we consider the $4$-pole $\mathcal{T}_j$ arising from the cycle $(u_{2j-1},u_{2j},v_{2j},v_{2j-1})$ of $\mathcal{P}_{r,\ell}$, whose semiedges are $e_1^j, e_2^j, e_3^j, e_4^j$ as in Figure \ref{figure c4pole}. The $r$-chain and $\ell$-chain giving rise to $\mathcal{P}_{r,\ell}$ consist of $\mathcal{T}_{1}, \ldots, \mathcal{T}_{r}$ (referred to as the \emph{right $r$-chain} of $\mathcal{P}_{r,\ell}$), and $\mathcal{T}_{r+1}, \ldots, \mathcal{T}_{r+\ell}$ (referred to as the \emph{left $\ell$-chain} of $\mathcal{P}_{r,\ell}$), which have semiedges $e_1^1, e_2^r, e_3^1, e_4^r$, and $e_1^{r+1}, e_2^{r+\ell},\linebreak e_3^{r+1}, e_4^{r+\ell}$, respectively. The papillon graph $\mathcal{P}_{r,\ell}$ is then obtained by joining the semiedges in pairs as follows: $e_1^1$ to $e_2^{r+\ell}$, $e_2^r$ to $e_1^{r+1}$, $e_3^1$ to $e_3^{r+1}$, and $e_4^r$ to $e_4^{r+\ell}$.

\subsection{Odd cycles and isomorphisms in the class of papillon graphs}\label{section odd}

In this section we shall discuss the presence and behaviour of odd cycles in papillon graphs. Consider the balanced papillon graph $\mathcal{P}_{n}$ and let $C$ be an odd cycle in $\mathcal{P}_{n}$. Since cycles intersect $C_{4}$-poles in $2$, $3$ or $4$ vertices, there must exist some $t_{1}\in[2n]$, such that $|V(C)\cap V(\mathcal{T}_{t_{1}})|=3$. Without loss of generality, assume that $t_{1}\in [n]$, that is, $\mathcal{T}_{t_{1}}$ belongs to the right $n$-chain of $\mathcal{P}_{n}$. If $t_{1}\not\in\{1,n\}$, we must have exactly one of the following:
\begin{itemize}
\item $|V(C)\cap V(\mathcal{T}_{i})|=4$, for all $i\in\{1,\ldots, t_{1}-1\}$; or
\item $|V(C)\cap V(\mathcal{T}_{i})|=4$, for all $i\in\{t_{1}+1,\ldots, n\}$.
\end{itemize}

Without loss of generality, assume that we either have $t_{1}=1$, or $|V(C)\cap V(\mathcal{T}_{i})|=4$, for all $i\in\{1,\ldots, t_{1}-1\}$. This implies that the number of vertices in $C$ belonging to $\cup_{i=1}^{t_{1}}V(\mathcal{T}_{i})$ is odd and at least 3. Moreover, the edges $a$ and $c$ must belong to $C$. We claim that $b\not\in E(C)$. For, suppose that $b\in E(C)$. Since $\mathcal{X}$ is a 4-edge-cut, $d\in E(C)$ as well. This implies that $n>1$ and there exist:
\begin{itemize}
\item $t_{2}\in \{t_{1}+1,\ldots, n\}$, such that $|V(C)\cap V(\mathcal{T}_{t_{2}})|=3$; 
\item $s_{1}\in \{n+1,\ldots, 2n-1\}$, such that $|V(C)\cap V(\mathcal{T}_{s_{1}})|=3$; and
\item $s_{2}\in \{s_{1}+1,\ldots, 2n\}$, such that $|V(C)\cap V(\mathcal{T}_{s_{2}})|=3$.
\end{itemize}
Let $\Omega=\{1,\ldots, t_{1}\}\cup \{t_{2},\ldots, n, n+1,\ldots, s_{1}\}\cup\{s_{2},\ldots, 2n\}$. If $\Omega\setminus\{t_{1},t_{2},s_{1},s_{2}\}\neq \emptyset$, then for any $j\in\Omega\setminus\{t_{1},t_{2},s_{1},s_{2}\}$, $|V(C)\cap V(\mathcal{T}_{j})|=4$. Additionally, for any $k\in[2n]\setminus\Omega$, $|V(C)\cap V(\mathcal{T}_{k})|=0$. However, this means that $C$ has even length, a contradiction. Thus, $\{b,d\}\cap E(C)=\emptyset$. As a result, $C$ intersects none of the $C_{4}$-poles $\mathcal{T}_{t_{1}+1},\ldots, \mathcal{T}_{n}$, but intersects each of the $C_{4}$-poles $\mathcal{T}_{n+1},\ldots, \mathcal{T}_{n}$ in exactly 2 or 4 vertices. Hence, the length of $C$ is at least $2n+3$. When $n=1$, $(u_{1},u_{2},v_{2},v_{4},u_{4})$ is a $5$-cycle, and when $n>1$, $(u_{1},u_{2},v_{2},v_{2n+2}, u_{2n+2},u_{2n+3},u_{2n+4},\ldots, u_{4n})$ is an odd cycle of length exactly $2n+3$. Therefore, a shortest odd cycle in $\mathcal{P}_{n}$ has length $2n+3$. By using similar arguments, a shortest odd cycle in $\mathcal{P}_{r,\ell}$ has length $2r+3$. 

\begin{remark}
The papillon graph $\mathcal{P}_{r,\ell}$ is not bipartite and has a shortest odd cycle of length $2r+3$.
\end{remark}

Consequently, we can show that any two distinct papillon graphs $\mathcal{P}_{r_{1},\ell_{1}}$ and $\mathcal{P}_{r_{2},\ell_{2}}$ are not isomorphic, where by distinct we mean that $(r_{1},\ell_{1})\neq (r_{2},\ell_{2})$. Suppose not, for contradiction. Since $\mathcal{P}_{r_{1},\ell_{1}}\simeq \mathcal{P}_{r_{2},\ell_{2}}$, we must have $r_{1}+\ell_{1}=r_{2}+\ell_{2}$, and so if $r_1=r_2$, then this implies that $\ell_1=\ell_2$, and conversely. Hence, $r_1\neq r_2$ and $\ell_1\neq \ell_2$. 
Thus, without loss of generality, we can assume that $r_{1}<r_{2}$. However, this means that a shortest odd cycle in $\mathcal{P}_{r_{1},\ell_{1}}$ (of length $2r_{1}+3$), is shorter than a shortest odd cycle in $\mathcal{P}_{r_{2},\ell_{2}}$ (of length $2r_{2}+3$), a contradiction. 

We are now in a position to give our first result.

\begin{theorem}\label{theorem papillon e2f}
Every papillon graph $\mathcal{P}_{r,\ell}$ is E2F.
\end{theorem}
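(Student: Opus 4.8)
The plan is to work directly from the definition: $\mathcal{P}_{r,\ell}$ is E2F precisely when none of its $2$-factors contains an odd cycle, so it suffices to prove that every $2$-factor has only even cycles. The key tool I would introduce is an auxiliary $2$-colouring $\chi$ of $V(\mathcal{P}_{r,\ell})$ defined by $\chi(u_i)=i\bmod 2$ and $\chi(v_i)=1-(i\bmod 2)$. A direct verification (on the outer-edges, spokes, and the inner-edges listed in the definition) shows that every edge is properly coloured by $\chi$ \emph{except} the two inner cut-edges $b$ and $c$, which are monochromatic; in particular each of the two chains is bipartite. Since any cycle must traverse an even number of properly coloured edges in order to return to its starting colour, the length of a cycle $C$ has the same parity as $|E(C)\cap\{b,c\}|$. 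Hence $C$ is odd if and only if it contains exactly one of $b$ and $c$.

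Now fix a $2$-factor $F$. As $|V(\mathcal{P}_{r,\ell})|=4r+4\ell$ is even, $F$ has an even number of odd cycles; and since distinct cycles are edge-disjoint while only $b$ and $c$ can make a cycle odd, $F$ has at most two odd cycles. So it has either none or exactly two. Assume for contradiction that it has exactly two, say $C_{1}\ni b$ and $C_{2}\ni c$ with $C_1\neq C_2$ (a single cycle containing both $b$ and $c$ would be even). Because $\mathcal{X}=\{a,b,c,d\}$ is a $4$-edge-cut, each $C_i$ meets $\mathcal{X}$ in an even number of edges; containing exactly one of $b,c$, each $C_i$ therefore contains exactly one further edge of $\{a,d\}$, and as $C_1,C_2$ are disjoint they use different ones. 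Consequently all four edges $a,b,c,d$ lie in $F$, so all four terminals of each chain are active.

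I would then record, for each of the two possible splittings of $\mathcal{X}$ between $C_1$ and $C_2$, the pairing it induces on the terminals $u_1,v_2,u_{2r},v_{2r-1}$ of the right $r$-chain and on $u_{2r+1},v_{2r+2},u_{2r+2\ell},v_{2r+2\ell-1}$ of the left $\ell$-chain, simply following which terminal each cut-edge meets. A short bookkeeping shows that in both cases at least one of the two chains is forced to realise the \emph{vertical} pairing, namely the one joining the two terminals lying in a common end-pole (for the right chain, joining $u_1$ to $v_2$ and $u_{2r}$ to $v_{2r-1}$, and analogously for the left chain). The argument is then closed by the following Chain Lemma, which I regard as the main obstacle: in a chain of $C_4$-poles with all four terminals active, the partial $2$-factor can realise only the horizontal pairing (joining $u_1$ to $u_{2r}$ and $v_2$ to $v_{2r-1}$) and the diagonal pairing (joining $u_1$ to $v_{2r-1}$ and $u_{2r}$ to $v_2$), but never the vertical one.

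To prove the Chain Lemma I would peel off the leftmost pole $\mathcal{T}_1$. The edge-cut isolating $V(\mathcal{T}_1)$ consists of its two active semiedges together with the two interface edges joining it to $\mathcal{T}_2$; parity then forces both interface edges into $F$, whence $F\cap\mathcal{T}_1$ is one of the two perfect matchings of the $4$-cycle $\mathcal{T}_1$. One matching preserves the upper/lower roles of the two left terminals, the other swaps them, so in either case the situation reduces to a chain one pole shorter, again with all terminals active. The base case is a single $4$-cycle, whose two perfect matchings realise exactly the horizontal and diagonal pairings, the vertical pair being non-adjacent on the $4$-cycle. Finally the invariant \emph{``horizontal or diagonal, never vertical''} is preserved by both the preserving and the swapping reduction (the swap interchanges the horizontal and diagonal outcomes), which completes the induction. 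This contradicts the vertical pairing demanded above; hence $F$ has no odd cycle, and $\mathcal{P}_{r,\ell}$ is E2F.
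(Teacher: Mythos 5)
Your proposal is correct in substance but takes a genuinely different, and considerably longer, route than the paper's. The paper argues purely locally: an odd cycle $C$ must meet some $C_{4}$-pole $\mathcal{T}_{j}$ in exactly $3$ vertices (meeting every pole in $0$, $2$ or $4$ vertices would make $|V(C)|$ even), and then the two pole-edges incident to the omitted fourth vertex lie neither on $C$ nor elsewhere in the $2$-factor (their other endvertices already have degree $2$ on $C$), so both would have to lie in the complementary perfect matching $M$ --- impossible, since they share a vertex. That is the entire proof. Your argument is global: the parity $2$-colouring $\chi$ isolating $b$ and $c$ as the only improperly coloured edges, the congruence $|E(C)|\equiv|E(C)\cap\{b,c\}|\pmod 2$, the cut argument forcing $\mathcal{X}\subseteq F$, and the inductive Chain Lemma on terminal pairings. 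What your route buys is structural information the short proof does not give: the colouring $\chi$ re-derives at once the odd-girth analysis of Section \ref{section odd} (an odd cycle must contain exactly one of $b,c$ and hence cross the principal $4$-edge-cut), and your Chain Lemma is a $2$-factor analogue of the machinery in Lemma \ref{lemma m cap x} and Corollary \ref{cor 2 pm and c4 poles}, so it would not be wasted work elsewhere in the paper. What it costs is length and one point of care, noted next.

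The step to tighten is in the inductive step of your Chain Lemma: parity alone does not force both interface edges into $F$; it only gives $|F\cap\delta(V(\mathcal{T}_{1}))|\in\{2,4\}$, and the value $2$ (no interface edge, hence three pole-edges of $\mathcal{T}_{1}$ in $F$) must be excluded separately. It is excluded by exactly the observation you already invoke in the base case: the two left terminals $u_{1}$ and $v_{2}$ are diagonally opposite on the $4$-cycle $(u_{1},u_{2},v_{2},v_{1})$, whereas a three-edge path in a $4$-cycle has adjacent endpoints, and the left terminals would have to be those endpoints since each already carries an active semiedge. With that sentence added, the induction --- and hence your whole proof --- is complete.
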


\begin{proof}
Let $\mathcal{P}_{r,\ell}$ be a counterexample to the above statement, and let $M$ be a perfect matching of $\mathcal{P}_{r,\ell}$ whose complementary 2-factor contains an odd cycle $C$. As previously discussed, $C$ must intersect some $\mathcal{T}_{j}$, for some $j\in[r+\ell]$, in exactly 3 (consecutive) vertices. Without loss of generality, assume that these 3 vertices are $u_{2j-1}, u_{2j}, v_{2j}$. This means that both the left semiedges ($e_{1}^{j}$ and $e_{3}^{j}$) of $\mathcal{T}_{j}$ belong to this odd cycle. However, since $C$ is in the complementary 2-factor of $M$, the two edges $u_{2j-1}v_{2j-1}$ and $v_{2j-1}v_{2j}$ (which do not belong to $E(C)$) must both belong to $M$, a contradiction.
\end{proof}

\section{The PMH-property in papillon graphs}

\subsection{The balanced case $r=\ell$}

Let $M$ be a perfect matching of the balanced papillon graph $\mathcal{P}_{n}$. Since $\mathcal{X}=\{a,b,c,d\}$ is a $4$-edge-cut of $\mathcal{P}_{n}$, $|M\cap \mathcal{X}|\equiv 0\pmod{2}$, that is, $|M\cap \mathcal{X}|$ is $0, 2$ or $4$. The following is a useful lemma which shall be used frequently in the results that follow.

\begin{lemma}\label{lemma m cap x}
Let $M$ be a perfect matching of the balanced papillon graph $\mathcal{P}_{n}$ and let $\mathcal{X}$ be its principal 4-edge-cut. If $|M\cap \mathcal{X}|=k$, then $|M\cap \partial\mathcal{T}_{j}|=k$, for each $j\in[2n]$.
\end{lemma}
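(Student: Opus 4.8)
Let me work out what's being claimed.The plan is to exploit the chain structure of $\mathcal{P}_n$ together with the fact that each $\mathcal{T}_j$ is a $C_4$-pole with exactly four semiedges $e_1^j, e_2^j, e_3^j, e_4^j$. The key observation is that each $\mathcal{T}_j$ contains exactly four vertices, so in any perfect matching $M$ each of these four vertices must be covered exactly once, either by one of the two internal edges of the $C_4$ (a ``horizontal'' or ``vertical'' edge of the $4$-cycle) or by one of the semiedges in $\partial\mathcal{T}_j$. Since the four internal edges of the $4$-cycle $(u_{2j-1},u_{2j},v_{2j},v_{2j-1})$ pair up the vertices internally, the number of semiedges of $\mathcal{T}_j$ used by $M$ must have the same parity as the number of vertices of $\mathcal{T}_j$, namely even; so $|M\cap\partial\mathcal{T}_j|\in\{0,2,4\}$. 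This parity fact is the workhorse, and it holds for \emph{every} $j\in[2n]$.

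First I would record this parity lemma for a single $C_4$-pole: $|M\cap\partial\mathcal{T}_j|$ is always even, because $M$ restricted to $\mathcal{T}_j$ covers each of its $4$ vertices exactly once and the internal edges contribute an even number of covered vertices. Next I would set up a propagation argument along each chain. Recall that consecutive poles $\mathcal{T}_j$ and $\mathcal{T}_{j+1}$ within a chain are joined by identifying the right semiedges $e_2^j,e_4^j$ of $\mathcal{T}_j$ with the left semiedges $e_1^{j+1},e_3^{j+1}$ of $\mathcal{T}_{j+1}$, producing two genuine edges of $\mathcal{P}_n$. The crucial point is that these two joining edges are precisely the semiedges $e_2^j,e_4^j$ \emph{counted by} $|M\cap\partial\mathcal{T}_j|$ on the right and \emph{counted by} $|M\cap\partial\mathcal{T}_{j+1}|$ on the left. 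Writing $L_j=|M\cap\{e_1^j,e_3^j\}|$ and $R_j=|M\cap\{e_2^j,e_4^j\}|$, the parity lemma gives $L_j+R_j$ even, while the join identity gives $R_j=L_{j+1}$ (the two joining edges are the same two edges of $\mathcal{P}_n$, so they are either in $M$ or not, consistently on both sides).

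The main step is then to combine these two relations to force all the $L_j,R_j$ to agree. From $L_j+R_j\equiv 0\pmod 2$ we get $R_j\equiv L_j$, and from $R_j=L_{j+1}$ we get $L_{j+1}\equiv L_j$; so the parities are constant along a chain. To upgrade from ``equal parity'' to ``equal value $k$'', I would argue that $R_j=L_{j+1}$ is an equality of actual counts (not just parities), so it transmits the exact value, and the parity lemma $L_j+R_j\in\{0,2,4\}$ with $R_j=L_{j+1}$ pins the common value: if the outer semiedges at one end of the combined chain carry value $k$, the value $k$ propagates unchanged through every pole. Finally I would connect the two chains through the principal edge-cut $\mathcal{X}=\{a,b,c,d\}$: the four semiedges at the extreme ends of the right $n$-chain and left $n$-chain are joined in pairs to form exactly the edges $a,b,c,d$, so $|M\cap\partial\mathcal{T}_1|$ and the analogous boundary counts at the chain ends are governed by $|M\cap\mathcal{X}|=k$. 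Tracing $k$ from $\mathcal{X}$ inward along both chains yields $|M\cap\partial\mathcal{T}_j|=k$ for every $j\in[2n]$.

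The step I expect to be the main obstacle is the bookkeeping at the junction with $\mathcal{X}$: I must verify carefully that the four semiedges bounding the ends of the two chains are glued precisely into $a,b,c,d$ and that the outer boundary counts of the terminal poles $\mathcal{T}_1,\mathcal{T}_r,\mathcal{T}_{r+1},\mathcal{T}_{r+\ell}$ are distributed among $a,b,c,d$ in a way consistent with $|M\cap\mathcal{X}|=k$. The interior propagation $L_{j+1}=R_j$ is routine once the semiedge-to-edge identification is set up, but getting the base case right at the $\mathcal{X}$-boundary—and checking that the two chains receive the \emph{same} value $k$ rather than independently chosen even values—requires a precise accounting of how $a,b,c,d$ split across the left and right chain ends, which is exactly where the symmetry of the balanced graph $\mathcal{P}_n$ is used.
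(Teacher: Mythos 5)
Your overall strategy --- propagate a boundary count through each $C_4$-pole along the two chains, starting from the principal $4$-edge-cut $\mathcal{X}$ --- is the same as the paper's, and the bookkeeping at the junction with $\mathcal{X}$ (the step you flagged as the main obstacle) is actually fine: the four end-semiedges of the two $n$-chains are indeed glued in pairs into $a,b,c,d$, so in your notation $k=L_1+R_n$. The genuine gap is elsewhere, in the step you dismiss as routine. What you need is the \emph{exact} equality $L_j=R_j$ for every pole; your parity lemma only yields $L_j\equiv R_j\pmod 2$, and combined with $R_j=L_{j+1}$ this propagates the parity, not the value. Nothing you state excludes, say, $L_j=2$, $R_j=0$, $L_{j+1}=0$, $R_{j+1}=2,\dots$, which satisfies every relation you invoke ($L_j+R_j\in\{0,2,4\}$ and $R_j=L_{j+1}$) yet makes $|M\cap\partial\mathcal{T}_j|$ oscillate between $2$ and $0$ instead of staying equal to $k$. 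So ``the parity lemma \dots pins the common value'' is an assertion, not an argument.

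The missing ingredient is structural rather than arithmetic: in $\mathcal{T}_j$ the two left semiedges are attached at diagonally opposite (non-adjacent) vertices of the $4$-cycle $(u_{2j-1},u_{2j},v_{2j},v_{2j-1})$, and likewise the two right semiedges at the other diagonal. Consequently, if both left semiedges lie in $M$ then the two remaining vertices form the other diagonal, are non-adjacent, and must each be matched by their own semiedge; if exactly one left semiedge lies in $M$ then the unique internal edge of $M$ in $\mathcal{T}_j$ leaves exactly one right vertex exposed; and if no left semiedge lies in $M$ then both left vertices are matched internally, which forces both right vertices to be matched internally as well. This is exactly the observation with which the paper opens its proof (the left semiedges of a $C_4$-pole lie in a perfect matching if and only if the right semiedges do), and it is what delivers $L_j=R_j$. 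Note that the lemma would be false for a hypothetical $4$-pole whose two left semiedges sat at \emph{adjacent} vertices of the $4$-cycle, even though your parity lemma would still hold there --- so the diagonal structure cannot be bypassed. Once $L_j=R_j$ is established, the rest of your propagation and the boundary accounting go through as you describe.
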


\begin{proof}
Let $M$ be a perfect matching of $\mathcal{P}_{n}$. We first note that the left semiedges of a $C_{4}$-pole are contained in a perfect matching if and only if the right semiedges of the $C_{4}$-pole are contained in the same perfect matching. The lemma is proved by considering three cases depending on the possible values of $k$, that is, $0,2$ or $4$. When $n=1$, the result clearly follows since $\mathcal{X}$ is made up by joining $\partial\mathcal{T}_{1}$ and $\partial\mathcal{T}_{2}$ accordingly. So assume $n\geq 2$.\\

\noindent\textbf{Case I.} $k=0$.\\
Since $a$ and $c$ do not belong to $M$, the left semiedges of $\mathcal{T}_{1}$ are not contained in $M$, and so $M$ cannot contain its right semiedges. Therefore, $|M\cap \partial\mathcal{T}_{1}|=0$. Consequently, the left semiedges of $\mathcal{T}_{2}$ are not contained in $M$ implying again that $|M\cap \partial\mathcal{T}_{2}|=0$. By repeating the same argument up till the $n^{\textrm{th}}$ $C_{4}$-pole, we have that $|M\cap \partial\mathcal{T}_{j}|=0$, for every $j\in[n]$. By noting that $c$ and $d$ do not belong to $M$ and repeating a similar argument to the 4-poles in the left $n$-chain, we can deduce that $|M\cap \partial\mathcal{T}_{j}|=0$ for every $j\in[2n]$.\\

\noindent\textbf{Case II.} $k=4$.\\
Since $a$ and $c$ belong to $M$, the left semiedges of $\mathcal{T}_{1}$ are contained in $M$, and so $M$ contains its right semiedges as well. Therefore, $|M\cap \partial\mathcal{T}_{1}|=4$. Consequently, the left semiedges of $\mathcal{T}_{2}$ are contained in $M$ implying again that $|M\cap \partial\mathcal{T}_{2}|=4$. As in Case I, by noting that both $c$ and $d$ belong to $M$ and repeating a similar argument to the 4-poles in the left $n$-chain, we can deduce that $|M\cap \partial\mathcal{T}_{j}|=4$ for every $j\in[2n]$.\\

\noindent\textbf{Case III.} $k=2$.\\
We first claim that when $k=2$, $M\cap \mathcal{X}$ must be equal to $\{a,d\}$ or $\{b,c\}$. For, suppose that $M\cap \mathcal{X}=\{a,c\}$, without loss of generality. This means that the right semiedges of $\mathcal{T}_{1}$ are also contained in $M$, implying that $|M\cap \partial\mathcal{T}_{1}|=4$. This implies that the left semiedges of $\mathcal{T}_{2}$ are contained in $M$, which forces $|M\cap \partial\mathcal{T}_{j}|$ to be equal to 4, for every $j\in[2n]$. In particular, $|M\cap \partial\mathcal{T}_{n}|=4$, implying that the edges $b$ and $d$ belong to $M$, a contradiction since $M\cap \mathcal{X}=\{a,c\}$. This proves our claim. Since the natural automorphism $\psi$ of $\mathcal{P}_{n}$, which exchanges the outer- and inner-cycles, exchanges also $\{a,d\}$ with $\{b,c\}$, without loss of generality, we may assume that $M\cap \mathcal{X}=\{a,d\}$. Since $c\not\in M$, $1\leq |M\cap \partial\mathcal{T}_{1}|<4$. But, $\partial\mathcal{T}_{1}$ corresponds to a 4-edge-cut in $\mathcal{P}_{n}$, and so, by using a parity argument, $|M\cap \partial\mathcal{T}_{1}|$ must be equal to 2, implying that exactly one of the right semiedges of $\mathcal{T}_{1}$ is contained in $M$. This means that exactly one left semiedge of $\mathcal{T}_{2}$ is contained in $M$, and consequently, by a similar argument now applied to $\mathcal{T}_{2}$, we obtain $|M\cap \partial\mathcal{T}_{2}|=2$. By repeating the same argument and noting that $\mathcal{T}_{n+1}$ has exactly one left semiedge (corresponding to the edge $d$) contained in $M$, one can deduce that $|M\cap \partial\mathcal{T}_{j}|=2$ for every $j\in[2n]$.
\end{proof}

The following two results are two consequences of the above lemma and they both follow directly from the proof of Case III. In a few words, if a perfect matching $M$ of $\mathcal{P}_n$ intersects its principal 4-edge-cut in exactly two of its edges, then these two edges are either the pair $\{a,d\}$ or the pair $\{b,c\}$, and, for every $j\in [2n]$,  $M$ contains only one pair of semiedges of $\mathcal{T}_j$ which does not consist of the pair of left semiedges of $\mathcal{T}_j$ nor the pair of right semiedges of $\mathcal {T}_j$.
\begin{corollary}\label{cor ad or bc}
Let $M$ be a perfect matching of $\mathcal{P}_{n}$ and let $\mathcal{X}$ be its principal 4-edge-cut. If $|M\cap \mathcal{X}|=2$, then $M\cap \mathcal{X}$ is equal to $\{a,d\}$ or $\{b,c\}$.
\end{corollary}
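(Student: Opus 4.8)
The plan is to read the corollary off from the proof of Case III of Lemma~\ref{lemma m cap x}. Since $|M\cap\mathcal{X}|=2$, the set $M\cap\mathcal{X}$ is one of the six two-element subsets of $\{a,b,c,d\}$, and I would rule out the four subsets other than $\{a,d\}$ and $\{b,c\}$. The only ingredient needed is the observation isolated at the start of that lemma's proof: in any $C_4$-pole, both of its left semiedges lie in a perfect matching if and only if both of its right semiedges do.

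First I would record how $a,b,c,d$ attach to the right $n$-chain $\mathcal{T}_1,\dots,\mathcal{T}_n$ and the left $n$-chain $\mathcal{T}_{n+1},\dots,\mathcal{T}_{2n}$. Tracing the joins $e_1^1\!\to\!e_2^{2n}$, $e_2^n\!\to\!e_1^{n+1}$, $e_3^1\!\to\!e_3^{n+1}$, $e_4^n\!\to\!e_4^{2n}$ used to build $\mathcal{P}_n$, one finds that $\{a,c\}$ and $\{b,d\}$ are, respectively, the left and the right semiedges of the right chain, while $\{c,d\}$ and $\{a,b\}$ are the left and the right semiedges of the left chain. The point is that each of these four ``bad'' pairs consists of two semiedges on the \emph{same} side of one chain, whereas $\{a,d\}$ and $\{b,c\}$ each contribute exactly one left and one right semiedge to each chain.

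With this in hand the argument is the propagation already performed for $\{a,c\}$ in Case III. Suppose $M\cap\mathcal{X}$ is a bad pair, say the two left semiedges of a chain. Then both left semiedges of the first pole of that chain lie in $M$, so by the iff both its right semiedges lie in $M$, giving $|M\cap\partial\mathcal{T}|=4$ for that pole; since the right semiedges of one pole are joined to the left semiedges of the next, this equality propagates along the whole chain, so both right semiedges of the last pole lie in $M$. But these are the halves of the remaining two edges of $\mathcal{X}$, so those edges lie in $M$ too, whence $|M\cap\mathcal{X}|=4$, contradicting $|M\cap\mathcal{X}|=2$. The ``both right semiedges'' subcase is identical after propagating towards the first pole. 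Hence $M\cap\mathcal{X}\in\{\{a,d\},\{b,c\}\}$.

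The only genuine obstacle is the bookkeeping of the second step---correctly reading off from the joining rules which of $a,b,c,d$ is a left or a right semiedge of which chain. Once that identification is fixed, all four bad cases collapse to the single propagation argument (and one could shorten the casework using the automorphism $\psi$, which swaps $\{a,d\}$ with $\{b,c\}$, together with the left--right symmetry of the balanced graph), while the propagation itself is verbatim the reasoning already carried out in Case III.
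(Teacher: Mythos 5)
Your proposal is correct and follows essentially the same route as the paper: the paper proves this corollary inside Case III of Lemma~\ref{lemma m cap x} by assuming $M\cap\mathcal{X}=\{a,c\}$ without loss of generality and propagating the ``left semiedges in $M$ iff right semiedges in $M$'' observation along the right $n$-chain until it forces $b,d\in M$, contradicting $|M\cap\mathcal{X}|=2$. Your identification of the four bad pairs as the same-side semiedge pairs of the two chains is accurate, and your explicit treatment of all four cases (or the reduction via $\psi$) is just the paper's ``without loss of generality'' spelled out.
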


\begin{corollary}\label{cor 2 pm and c4 poles}
Let $M$ be a perfect matching of $\mathcal{P}_{n}$ and let $\mathcal{X}$ be its principal 4-edge-cut such that $|M\cap \mathcal{X}|=2$. For each $j\in[2n]$, $M$ contains exactly one of the following sets of semiedges: $\{e_1^j,e_2^j\}, \{e_3^j,e_4^j\}, \{e_1^j,e_4^j\}, \{e_2^j,e_3^j\}$, that is, of all possible pairs of semiedges of $\mathcal{T}_{j}$, $\{e_1^j,e_3^j\}$ and $\{e_2^j,e_4^j\}$ cannot be contained in $M$.
\end{corollary}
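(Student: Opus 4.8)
The plan is to obtain this as an immediate consequence of Lemma \ref{lemma m cap x} together with the local matching observation recorded at the start of its proof. Since $|M\cap\mathcal{X}|=2$, Lemma \ref{lemma m cap x} gives $|M\cap\partial\mathcal{T}_j|=2$ for every $j\in[2n]$; hence exactly one of the six possible pairs of semiedges of $\mathcal{T}_j$ lies in $M$. It therefore suffices to exclude the pair of left semiedges $\{e_1^j,e_3^j\}$ and the pair of right semiedges $\{e_2^j,e_4^j\}$, which leaves precisely the four pairs listed in the statement.

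To rule these out, I would argue locally inside the single $C_4$-pole $\mathcal{T}_j$, whose edge set induces the $4$-cycle $(u_{2j-1},u_{2j},v_{2j},v_{2j-1})$. Suppose both left semiedges $e_1^j$ and $e_3^j$ belong to $M$, so that the two left vertices of $\mathcal{T}_j$ are matched through their semiedges. Within $\mathcal{T}_j$ each of the remaining two (right) vertices is adjacent only to the two left vertices, so neither can be matched by an internal edge of $\mathcal{T}_j$; both must therefore be matched by their own semiedges, forcing $e_2^j,e_4^j\in M$ as well. This yields $|M\cap\partial\mathcal{T}_j|=4$, contradicting Lemma \ref{lemma m cap x}. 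A symmetric argument, starting from $e_2^j,e_4^j\in M$, likewise forces $e_1^j,e_3^j\in M$ and the same contradiction. This is exactly the observation made in the opening lines of the proof of Lemma \ref{lemma m cap x}, namely that the left semiedges of a $C_4$-pole lie in a perfect matching if and only if its right semiedges do.

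Since the statement is purely local to each $\mathcal{T}_j$ once Lemma \ref{lemma m cap x} has supplied $|M\cap\partial\mathcal{T}_j|=2$, there is no genuine global obstacle here. The only point requiring care is the elementary verification that matching both vertices on one side of a $C_4$-pole through their semiedges leaves no internal edge available for the opposite side; this is immediate from the $4$-cycle structure and completes the exclusion of $\{e_1^j,e_3^j\}$ and $\{e_2^j,e_4^j\}$.
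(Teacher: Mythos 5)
Your argument is correct and follows the same route as the paper: Lemma \ref{lemma m cap x} gives $|M\cap\partial\mathcal{T}_j|=2$, and the observation that the left semiedges of a $C_4$-pole lie in $M$ if and only if the right ones do (which the paper records at the start of that lemma's proof and reuses in Case III, from which it says the corollary follows) excludes exactly the pairs $\{e_1^j,e_3^j\}$ and $\{e_2^j,e_4^j\}$. Your explicit verification of that observation via the bipartite structure of the $4$-cycle is a welcome spelling-out of a step the paper only asserts.
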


In the sequel, the process of traversing one path after another shall be called \emph{concatenation of paths}. If two paths $P$ and $Q$ have endvertices $x,y$ and $y,z$, respectively, we write $PQ$ to denote the path starting at $x$ and ending at $z$ obtained by traversing $P$ and then $Q$. Without loss of generality, if $x$ is adjacent to $y$, that is, $P$ is a path on two vertices, we may write $xyQ$ instead of $PQ$.

\begin{lemma}\label{lemma complementary}
Let $M_{1}$ be a perfect matching of $\mathcal{P}_{n}$ such that $|M_{1}\cap \mathcal{X}|=2$.
\begin{enumerate}[(i)]
\item There exists a perfect matching $M_{2}$ of $\mathcal{P}_n$ such that $|M_{2}\cap \mathcal{X}|=2$ and $M_{1}\cap M_{2}=\emptyset$.
\item The complementary $2$-factors of $M_{1}$ and $M_{2}$ are both Hamiltonian cycles.
\end{enumerate}
\end{lemma}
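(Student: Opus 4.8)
The plan is to first prove a single structural statement — that any perfect matching $M$ of $\mathcal{P}_n$ with $|M\cap\mathcal{X}|=2$ has a complementary 2-factor that is a single Hamiltonian cycle — since this settles part (ii) for both $M_1$ and $M_2$ at once; part (i) will then follow by cutting this Hamiltonian cycle into two perfect matchings. By Corollary \ref{cor ad or bc} together with the automorphism $\psi$, I may assume $M\cap\mathcal{X}=\{a,d\}$, so that the complementary 2-factor $F=E(\mathcal{P}_n)\setminus M$ meets $\mathcal{X}$ exactly in $\{b,c\}$. By Lemma \ref{lemma m cap x} and Corollary \ref{cor 2 pm and c4 poles}, $M$ contains exactly one left and one right semiedge of every $\mathcal{T}_j$, hence $F$ does too, and $M$ uses exactly one edge of the $4$-cycle of $\mathcal{T}_j$. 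A short check of the four admissible configurations of Corollary \ref{cor 2 pm and c4 poles} shows that inside each $\mathcal{T}_j$ the three remaining $4$-cycle edges, together with the two $F$-semiedges, form a single path through all four vertices of $\mathcal{T}_j$ joining its unique $F$-left-semiedge to its unique $F$-right-semiedge; thus each $C_4$-pole behaves in $F$ as a single connector from one side to the other.

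The key step is the connectivity of $F$, which I would establish by passing to the reduced graph $\Gamma$ on vertex set $\{\mathcal{T}_1,\dots,\mathcal{T}_{2n}\}$ whose edges are the joins lying in $F$. Since $M$ contains exactly one of the two right semiedges $e_2^j,e_4^j$ of each $\mathcal{T}_j$, precisely one of the two joins between consecutive poles of a chain lies in $F$; hence the right chain $\mathcal{T}_1,\dots,\mathcal{T}_n$ and the left chain $\mathcal{T}_{n+1},\dots,\mathcal{T}_{2n}$ each become a path in $\Gamma$. Because $F\cap\mathcal{X}=\{b,c\}$ with $b=e_4^n{-}e_4^{2n}$ and $c=e_3^1{-}e_3^{n+1}$, the only further edges of $\Gamma$ join $\mathcal{T}_n$ to $\mathcal{T}_{2n}$ and $\mathcal{T}_1$ to $\mathcal{T}_{n+1}$. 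Tracing these yields the single cycle $\mathcal{T}_1\mathcal{T}_2\cdots\mathcal{T}_n\mathcal{T}_{2n}\mathcal{T}_{2n-1}\cdots\mathcal{T}_{n+1}\mathcal{T}_1$, so $\Gamma$ is a single $2n$-cycle. As every pole is internally a single $F$-path, expanding $\Gamma$ shows $F$ is one cycle through all $8n$ vertices, i.e.\ a Hamiltonian cycle; this proves (ii) for $M=M_1$.

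For part (i), I would split the even Hamiltonian cycle $F$ into its two alternating perfect matchings of $\mathcal{P}_n$; both are contained in $F=E(\mathcal{P}_n)\setminus M_1$ and hence are disjoint from $M_1$. It remains to take for $M_2$ the class containing both $b$ and $c$, which forces $M_2\cap\mathcal{X}=\{b,c\}$ and so $|M_2\cap\mathcal{X}|=2$. Two edges of an even cycle lie in the same alternating class iff the number of edges strictly between them is odd; using the cyclic order above, the arc of $F$ joining $b$ to $c$ through $\mathcal{T}_{2n},\dots,\mathcal{T}_{n+1}$ consists of $n$ pole-paths of three edges each and $n-1$ joins, that is $4n-1\equiv 1\pmod 2$ edges, so $b$ and $c$ do lie in the same class and $M_2$ exists. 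Finally, (ii) for $M_2$ is the mirror image of the argument above: since $M_2\cap\mathcal{X}=\{b,c\}$, its complementary $2$-factor meets $\mathcal{X}$ in $\{a,d\}$, and the same reduction to $\Gamma$ — now using the joins $a=e_1^1{-}e_2^{2n}$ and $d=e_2^n{-}e_1^{n+1}$ — produces the single cycle $\mathcal{T}_1\cdots\mathcal{T}_n\mathcal{T}_{n+1}\cdots\mathcal{T}_{2n}\mathcal{T}_1$, so this complementary $2$-factor is Hamiltonian as well.

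The main obstacle is the Hamiltonicity argument for $F$: the delicate bookkeeping is to verify that exactly one join survives in $F$ between consecutive poles, so that the reduced graph $\Gamma$ is a single $2n$-cycle rather than two disjoint cycles — this is precisely where the locations of $b$ and $c$ on the two chains matter. The secondary delicate point is the parity check placing $b$ and $c$ in the same alternating class of $F$, which ultimately rests on each pole contributing an odd number ($3$) of edges to $F$, making the relevant arc length $\equiv 2n-1\equiv 1\pmod 2$ independently of $n$.
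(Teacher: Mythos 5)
Your argument is correct, and its core---decomposing the complementary $2$-factor $F$ into one $3$-edge path per $C_4$-pole, observing that exactly one join between consecutive poles of each chain survives in $F$, and closing up through $b$ and $c$ into the single cycle $\mathcal{T}_1\cdots\mathcal{T}_n\mathcal{T}_{2n}\cdots\mathcal{T}_{n+1}$---is precisely the paper's proof of part (ii), which writes the same Hamiltonian cycle explicitly as the concatenation $P^{(1)}Q^{(1,2)}\cdots P^{(n)}Q^{(n,2n)}P^{(2n)}\cdots P^{(n+1)}Q^{(n+1,1)}$; your reduced graph $\Gamma$ is that concatenation packaged as a quotient. Where you genuinely diverge is the logical order and the treatment of part (i). The paper proves (i) first and constructively: it lets $N$ be the perfect matching consisting of the two end-edges of each pole-path $P^{(j)}$ and sets $M_2=E(\mathcal{P}_n)\setminus(M_1\cup N)$, so disjointness and $M_2\supseteq\mathcal{X}\setminus M_1$ are immediate with no Hamiltonicity needed. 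You instead prove (ii) first for an arbitrary perfect matching meeting $\mathcal{X}$ in two edges (which handles $M_1$ and $M_2$ uniformly and spares you the paper's ``follows analogously''), and then extract $M_2$ as the alternating class of $F$ containing $b$ and $c$, at the cost of the parity computation $3n+(n-1)=4n-1\equiv 1\pmod 2$. The two constructions yield the same $M_2$: along $F$ each pole contributes the pattern join--edge--edge--edge--join, so all joins together with the middle edges of the $P^{(j)}$ form one alternating class (your $M_2$, the paper's $M_2$), while the end-edges of the $P^{(j)}$ form the other (the paper's $N$); in particular your parity check, though correct, is the observation that $b$ and $c$ are both joins. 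In short, your route buys a single uniform proof of (ii) at the price of an extra parity argument; the paper's buys a parity-free (i) at the price of running the Hamiltonicity argument twice.
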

\begin{proof}
(i)\,\, Since $|M_{1}\cap \mathcal{X}|=2$, by Lemma \ref{lemma m cap x} we get that $|M_{1}\cap \partial\mathcal{T}_{j}|=2$ for every $j\in[2n]$. For each $j$, let $P^{(j)}$ be the subgraph of $\mathcal{P}_{n}$ which is induced by $E(\mathcal{T}_{j})-M_{1}$. Note that $\cup_{j=1}^{2n} V(P^{(j)})=V(\mathcal{P}_{n})$. By Corollary \ref{cor 2 pm and c4 poles}, each $P^{(j)}$ is a path of length 3. Letting $N$ be the unique perfect matching of $\mathcal{P}_{n}$ which intersects each $E(P^{(j)})$ in exactly two edges, we note that $M_{1}\cap N=\emptyset$. Let $M_{2}=E(\mathcal{P}_{n})-(M_{1}\cup N)$. Since $M_{1}$ and $N$ are two disjoint perfect matchings, $M_{2}$ is also a perfect matching of $\mathcal{P}_{n}$ and, in particular, $M_{2}$ contains $\mathcal{X}-(M_{1}\cap \mathcal{X})$. Thus, $|M_{2}\cap \mathcal{X}|=2$ and $M_{1}\cap M_{2}=\emptyset$, proving part (i).

\begin{figure}[h]
\centering
\includegraphics[width=0.7\textwidth]{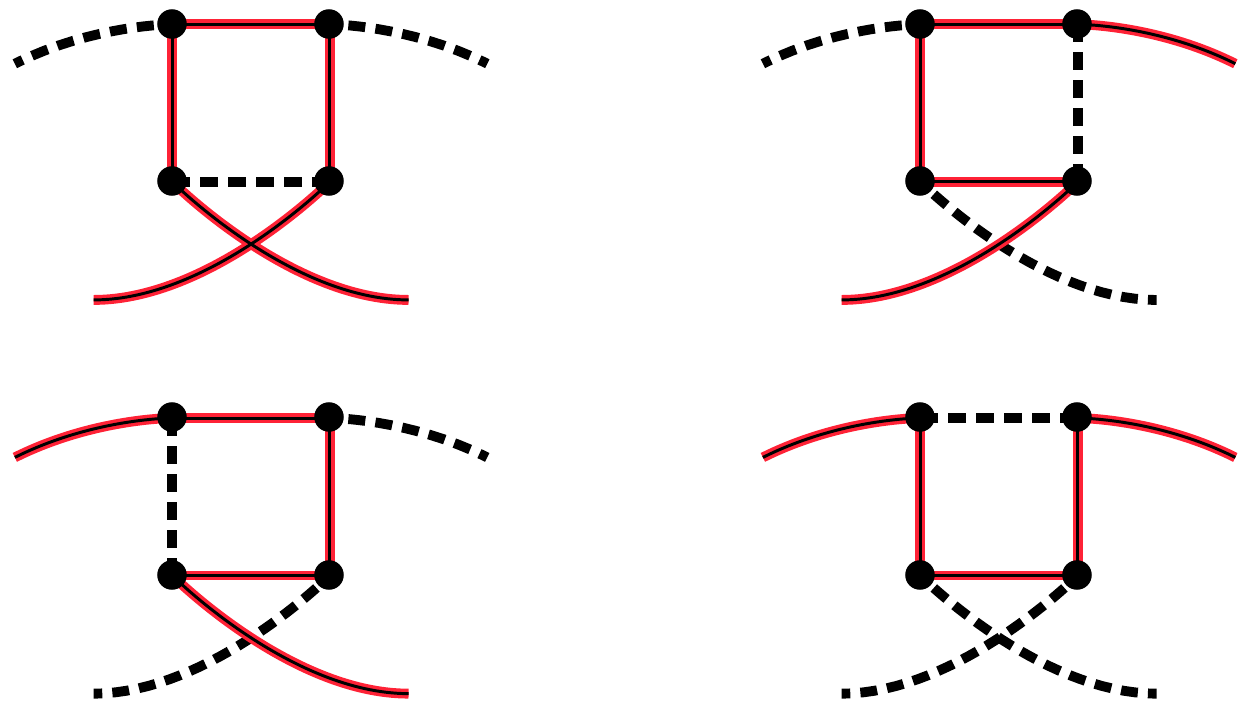}
\caption{Perfect matching $M_{1}$ (bold dashed edges) with $|M_{1}\cap \mathcal{X}|=2$ and its complementary 2-factor (highlighted edges)}
\label{figure comp 2factor lemma}
\end{figure}
(ii)\,\, Let $M_{2}$ be as in part (i), that is, $|M_{2}\cap \mathcal{X}|=2$ and $M_{1}\cap M_{2}=\emptyset$. When $n=1$, the result clearly follows. So assume $n\geq 2$. For distinct $i$ and $j$ in $[2n]$, let $Q^{(i,j)}$ be the subgraph of $\mathcal{P}_{n}$ which is induced by $M_{2}\cap \{xy\in E(\mathcal{P}_{n}): x\in V(\mathcal{T}_{i}), y\in V(\mathcal{T}_{j})\}$, that is, $E(Q^{(i,j)})$ is either empty or consists of exactly one edge, that is, $Q^{(i,j)}$ is a path of length 1. When $M_{1}\cap \mathcal{X}=\{a,d\}$, we can form a Hamiltonian cycle of $\mathcal{P}_{n}$ (not containing $M_{1}$) by considering the following concatenation of paths:
\begin{linenomath}
$$P^{(1)}Q^{(1,2)}\ldots Q^{(n-1,n)}P^{(n)}Q^{(n,2n)}P^{(2n)}Q^{(2n,2n-1)}\ldots P^{(n+1)}Q^{(n+1,1)},$$
\end{linenomath}
where $Q^{(1,2)}$ and $Q^{(2n,2n-1)}$ are respectively followed by $P^{(2)}$ and $P^{(2n-1)}$, and, $Q^{(n,2n)}$ and $Q^{(n+1,1)}$ consist of the edges $b$ and $c$, respectively. On the other hand, when $M_{1}\cap \mathcal{X}=\{b,c\}$, we can form a Hamiltonian cycle of $\mathcal{P}_{n}$ (not containing $M_{1}$) by considering the following concatenation of paths:
\begin{linenomath}
$$P^{(1)}Q^{(1,2)}\ldots Q^{(n-1,n)}P^{(n)}Q^{(n,n+1)}P^{(n+1)}Q^{(n+1,n+2)}\ldots P^{(2n)}Q^{(2n,1)},$$
\end{linenomath}
where $Q^{(1,2)}$ and $Q^{(n+1,n+2)}$ are respectively followed by $P^{(2)}$ and $P^{(n+2)}$, and, $Q^{(n,n+1)}$ and $Q^{(2n,1)}$ consist of the edges $d$ and $a$, respectively. Thus, the complementary 2-factor of $M_{1}$ is a Hamiltonian cycle. This is depicted in Figure \ref{figure comp 2factor lemma}. The proof that the complementary 2-factor of $M_{2}$ is a Hamiltonian cycle follows analogously.
\end{proof}

\begin{proposition}\label{prop papillon not pmh}
Let $n$ be a positive odd integer. Then, the balanced papillon graph $\mathcal{P}_{n}$ is not PMH.
\end{proposition}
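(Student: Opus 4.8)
The plan is to exhibit a single perfect matching $M$ of $\mathcal{P}_{n}$ that admits no complementary Hamiltonian cycle, and the natural candidate is
$M=\{u_{2j-1}u_{2j}:j\in[2n]\}\cup\{v_{2j-1}v_{2j}:j\in[2n]\}$, the matching that selects the outer-edge and the inner-edge inside every $C_{4}$-pole $\mathcal{T}_{j}$. Since none of the edges of $\mathcal{X}$ lies inside a $\mathcal{T}_{j}$, we have $|M\cap\mathcal{X}|=0$, so $M$ falls outside the scope of Lemma \ref{lemma complementary}. Suppose, for contradiction, that some perfect matching $N$ makes $M\cup N$ a Hamiltonian cycle $H$. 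As $M$ is a perfect matching and $H$ is $2$-regular, $N$ is forced to be disjoint from $M$; hence $N$ is a perfect matching of the complementary $2$-factor $R:=E(\mathcal{P}_{n})-M$.

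First I would describe $R$: it consists of all the spokes together with every outer-edge and every inner-edge not contained in $M$. The key local remark is that each vertex of $\mathcal{P}_{n}$ is incident to exactly one spoke and that this spoke lies in $R$; consequently, along any cycle of $R$ the spokes and the non-spokes alternate. Thus every cycle of $R$ is even (in agreement with Theorem \ref{theorem papillon e2f}), and each such cycle has exactly two perfect matchings: one consisting of all of its spokes and one consisting of all of its non-spokes. So a completing matching $N$ amounts to choosing, independently for each cycle of $R$, between ``all spokes'' and ``all non-spokes''.

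The crux — and the step I expect to be the main obstacle — is to show that for \emph{odd} $n$ the graph $R$ is a single cycle, so that $M$ has precisely two completing matchings, namely the set of all spokes and its complement $E(R)$ minus the spokes. I would prove this by a propagation argument on $2n$ pairs of bits, recording for each column $j\in[2n]$ whether $N$ uses the spoke $u_{2j-1}v_{2j-1}$ and whether it uses the spoke $u_{2j}v_{2j}$. The requirement that $N$ match both endpoints of each outer-edge exactly once lets one work with a single bit per column, and the analogous requirement on the inner-edges forces the relation that the bit of column $i$ equals the bit of column $i+2$. Because in the balanced graph the inner cycle is glued through the special edges $b$ and $c$ and omits the index $s=n$, for odd $n$ this relation identifies all odd-indexed columns with one another and all even-indexed columns with one another, after which $b$ (or $c$) ties the two classes together, forcing every bit to take a common value. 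It is exactly here that parity is decisive: when $n$ is even the omitted index $s=n$ is itself even, the even chain breaks, and extra configurations survive, which is precisely what will later let even $n$ behave differently (Theorem \ref{theorem papillon pmh}). Carrying out this index bookkeeping cleanly — through the Type-2 inner-edges $v_{2i-1}v_{2i+2}$, the skip at $i=n$, and the edges $b,c$ — is the delicate part.

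It then remains to rule out the two surviving completions. If $N$ is the set of all spokes, then $H$ contains both spokes of each $\mathcal{T}_{j}$ and hence the entire $4$-cycle of $\mathcal{T}_{j}$, so $H$ decomposes into $2n$ disjoint quadrilaterals. If $N$ is the complementary choice, then $M\cup N$ uses every outer-edge and every inner-edge, so it is the disjoint union of the outer $4n$-cycle and the inner $4n$-cycle. In either case $H$ is not a single cycle, contradicting the choice of $H$. Therefore no completing $N$ exists, the matching $M$ cannot be extended to a Hamiltonian cycle, and $\mathcal{P}_{n}$ is not PMH.
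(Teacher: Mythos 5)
Your proposal follows essentially the same route as the paper: the same perfect matching $M=\bigcup_{i=1}^{2n}\{u_{2i-1}u_{2i},v_{2i-1}v_{2i}\}$, and the same two terminal configurations ($2n$ disjoint quadrilaterals when all spokes are used, or the disjoint union of the outer and inner $4n$-cycles when none are). Your reformulation through the complementary $2$-factor $R=E(\mathcal{P}_n)-M$ is correct and actually sharpens the paper's argument: since every cycle of $R$ alternates spokes with non-spokes, a completing matching is an independent all-spokes/all-non-spokes choice on each cycle of $R$, so everything reduces to showing that $R$ is a \emph{single} cycle when $n$ is odd. The paper reaches the same point by a forcing argument started at $u_1$ (``the choice of $u_1u_{4n}$ forbids all the spokes''), which is valid precisely because that forcing propagates around one cycle of $R$ that covers all vertices; so both proofs hinge on the identical claim, and you deserve credit for isolating it and for observing that this is exactly where the parity of $n$ enters (for even $n$ the $2$-factor $R$ splits into two cycles, and a mixed choice produces a Hamiltonian cycle, consistently with Theorem \ref{theorem papillon pmh}).

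The one genuine gap is that the crux is announced but not proved: you explicitly defer ``the delicate index bookkeeping''. Moreover, the bookkeeping as sketched is slightly off. The outer-edge constraints identify the spoke at $u_{2j}$ with the spoke at $u_{2j+1}$, i.e.\ they pair the \emph{right} spoke of $\mathcal{T}_j$ with the \emph{left} spoke of $\mathcal{T}_{j+1}$, so there is no well-defined ``single bit per column $\mathcal{T}_j$''; the natural unit is the pair $\{u_{2j},u_{2j+1}\}$, and the inner edges $v_{2i-1}v_{2i+2}$ identify such pairs two apart. Equivalently, the generic constraints confine each spoke index to its residue class $\{0,1\}$ or $\{2,3\}$ modulo $4$ (with $a$ and $d$ staying inside a class), and the edges $c=v_{2}v_{2n+2}$ and $b=v_{2n-1}v_{4n-1}$ join the two classes if and only if $2n+2\equiv 0$ and $4n-1\not\equiv 2n-1 \pmod 4$, i.e.\ if and only if $n$ is odd. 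So the gap is fillable along the lines you indicate, but as written your proof stops exactly at the step that carries all the content --- the same step, it should be said, that the paper's own proof asserts rather than verifies.
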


\begin{proof}
Consider the following perfect matching of the balanced papillon graph $\mathcal{P}_{n}$:
\[M=\cup_{i=1}^{2n}\{u_{2i-1}u_{2i}, v_{2i-1}v_{2i}\}.\] It is clear that when $n=1$, the perfect matching $M$ cannot be extended to a Hamiltonian cycle of the balanced papillon graph $\mathcal{P}_{1}$. So assume that $n\geq 3$. We claim that $M$ cannot be extended to a Hamiltonian cycle of $\mathcal{P}_{n}$.
For, let $F$ be a $2$-factor of $\mathcal{P}_{n}$ containing $M$. Since $u_1u_2 \in M$ and $\mathcal{P}_{n}$ is cubic, $F$ contains exactly one of the following two edges: $u_{1}u_{4n}$ or $u_{1}v_{1}$.
In the former case, if $u_{1}u_{4n} \in E(F)$, then, $u_{2n}u_{2n+1}$ and  all the edges of the outer- and inner-cycle will belong to $F$ (at the same time, the choice of $u_{1}u_{4n}$ forbids all the spokes of $\mathcal{P}_{n}$ to belong to $F$), yielding two disjoint cycles each of length $4n$. In the latter case, if $u_{1}v_{1}\in E(F)$, then $F$ must also contain all spokes $u_{i}v_{i}$, for $1<i\leq 4n$. In fact, the subgraph induced by the set of spokes is exactly the complement of the 2-factor obtained in the former case. Consequently, $F$ will consist of $2n$ disjoint $4$-cycles. 
\end{proof}

Consider $\mathcal{P}_{n}$, with $n\geq 2$, and let $M$ be a perfect matching of $\mathcal{P}_{n}$ with $M \cap \mathcal{X}=0$, which by Lemma \ref{lemma m cap x} implies that $|M\cap \partial\mathcal{T}_{j}|=0$ for all $j\in[2n]$. Now consider $j\in [2n]\setminus\{n,2n\}$ and let $\mathcal{T}_{(j,j+1)}$ denote a $2$-chain composed of $\mathcal{T}_{j}$ and $\mathcal{T}_{j+1}$.
We say that $\mathcal{T}_{(j,j+1)}$ is \emph{symmetric with respect to $M$} if exactly one of the following occurs:
\begin{enumerate}[(i)]
\item $\{u_{2j-1}v_{2j-1},u_{2j}v_{2j},u_{2j+1}v_{2j+1},u_{2j+2}v_{2j+2}\}\subset M$; or
\item $\{u_{2j-1}u_{2j},v_{2j-1}v_{2j},u_{2j+1}u_{2j+2},v_{2j+1}v_{2j+2}\}\subset M$.
\end{enumerate}
If neither (i) nor (ii) occur, $\mathcal{T}_{(j,j+1)}$ is said to be \emph{asymmetric with respect to $M$}. This is shown in Figure \ref{figure sym asym definition}.

\begin{figure}[h]
\centering
\includegraphics[width=1\textwidth]{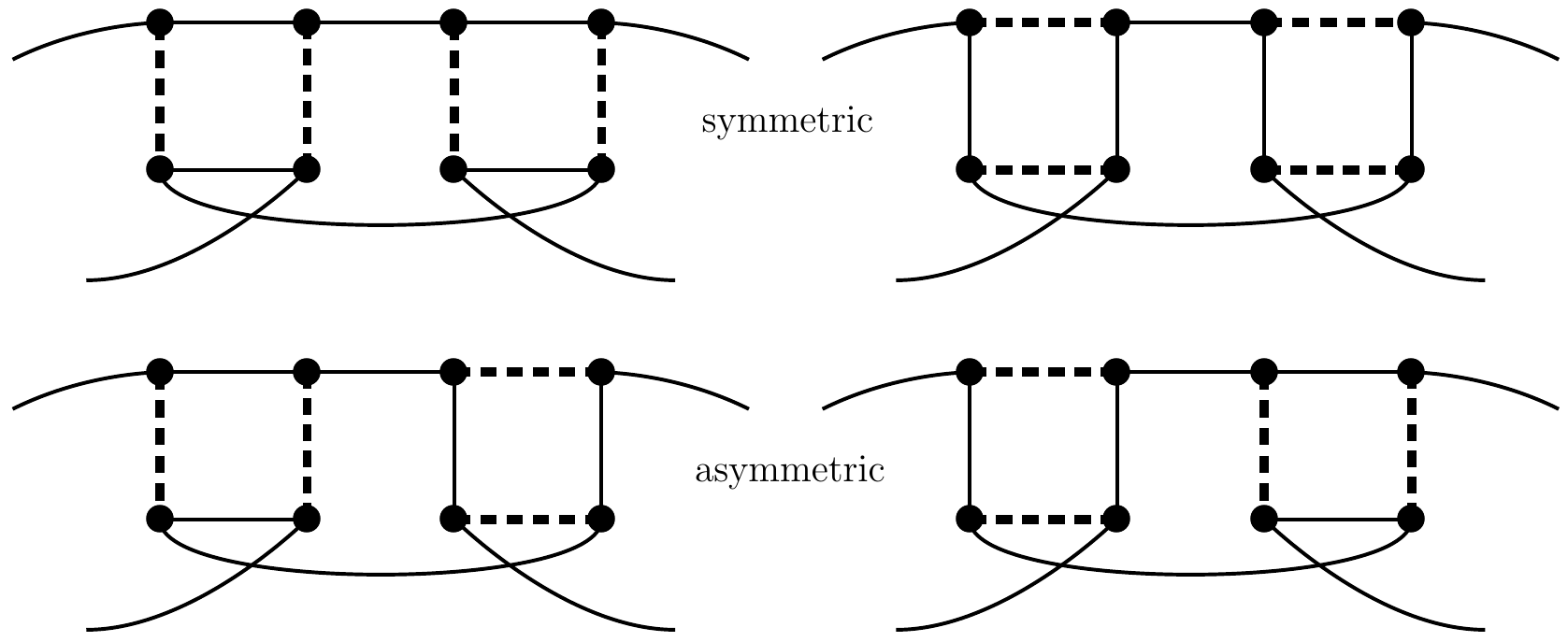}
\caption{Symmetric and asymmetric $2$-chains with the bold dashed edges belonging to $M$}
\label{figure sym asym definition}
\end{figure}

\begin{remark}\label{remark m cap x 2}
Let $n\geq 2$. Consider a perfect matching $M_{1}$ of $\mathcal{P}_{n}$ such that $M_{1}$ does not intersect the principal 4-edge-cut $\mathcal{X}$ of $\mathcal{P}_{n}$, that is, $M_{1}\cap \mathcal{X}=\emptyset$, and consider a $2$-chain of $\mathcal{P}_{n}$, say $\mathcal{T}_{(j,j+1)}$ with $j\in [2n]\setminus\{n,2n\}$, having semiedges $e_1,e_2,e_3,e_4$, where $e_1=e_{1}^{j},e_2=e_{2}^{j+1},e_3=e_{3}^{j}$ and $e_4=e_{4}^{j+1}$.
Assume there exists a perfect matching  $M_{2}$ of $\mathcal{P}_{n}$ such that $|M_{2}\cap \mathcal{X}|=2$ and $M_{1}\cap M_{2}=\emptyset$ (see Figure \ref{figure remark1}). If $\mathcal{T}_{(j,j+1)}$ is symmetric with respect to $M_{1}$, then we have exactly one of the following instances:
\begin{linenomath}
$$M_{2} \cap \partial\mathcal{T}_{(j,j+1)}=\{e_1,e_2\} \mbox{ (upper); \quad or \quad} M_{2} \cap \partial\mathcal{T}_{(j,j+1)}=\{e_3,e_4\} \mbox{ (lower)}.$$\end{linenomath} 
Otherwise, if $\mathcal{T}_{(j,j+1)}$ is asymmetric with respect to $M_{1}$, then exactly one of the following must occur:
\[M_{2} \cap \partial\mathcal{T}_{(j,j+1)}=\{e_1,e_4\} \mbox{ (upper left, lower right); or }\]
\[M_{2} \cap \partial\mathcal{T}_{(j,j+1)}=\{e_2,e_3\} \mbox{ (upper right, lower left)}.\]

Notwithstanding whether $\mathcal{T}_{(j,j+1)}$ is symmetric or asymmetric with respect to $M_{1}$, $(M_{1}\cup M_{2})\cap E(\mathcal{T}_{(j,j+1)})$ induces a path (see Figure \ref{figure remark1}) which contains all the vertices of $V(\mathcal{T}_{(j,j+1)})$, and whose endvertices are the endvertices of the semiedges in $M_{2}\cap\partial\mathcal{T}_{(j,j+1)}$.

\begin{figure}[h]
\centering
\includegraphics[width=1\textwidth]{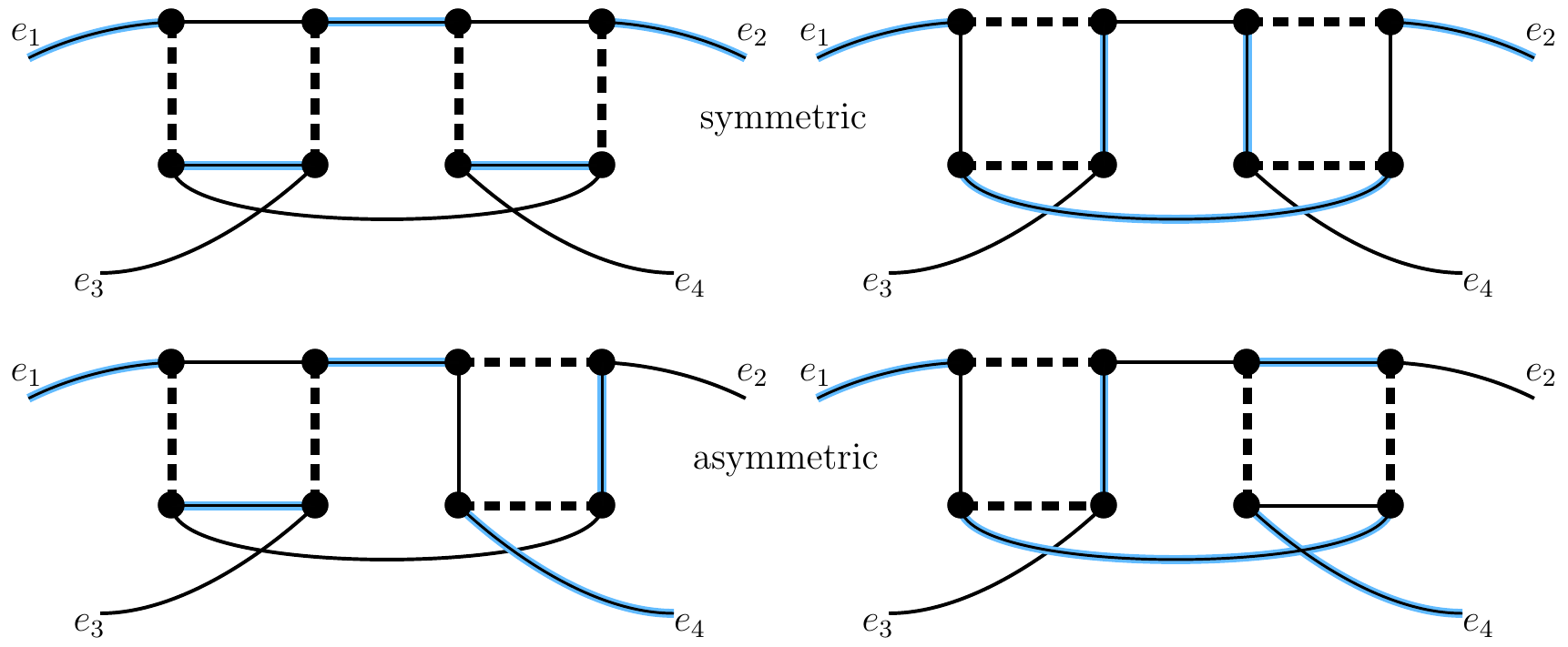}
\caption{$2$-chains when $M_{1}\cap \mathcal{X}=\emptyset$ and $|M_{2}\cap \mathcal{X}|=2$ (bold dashed edges belong to $M_{1}$ and highlighted edges to $M_{2}$)}
\label{figure remark1}
\end{figure}
\end{remark}

\begin{remark}\label{remark m cap x 4}
Let $n\geq 2$. Consider a perfect matching $M_{1}$ of $\mathcal{P}_{n}$ such that $M_{1}$ does not intersect the principal 4-edge-cut $\mathcal{X}$ of $\mathcal{P}_{n}$, that is, $M_{1}\cap \mathcal{X}=\emptyset$, and consider a $2$-chain of $\mathcal{P}_{n}$, say $\mathcal{T}_{(j,j+1)}$ with $j\in [2n]\setminus\{n,2n\}$.
Let $M_{2}$ be the perfect matching of $\mathcal{P}_{n}$ such that $|M_{2}\cap \mathcal{X}|=4$. Clearly $M_{1}\cap M_{2}=\emptyset$. Notwithstanding whether $\mathcal{T}_{(j,j+1)}$ is symmetric or asymmetric with respect to $M_{1}$, we have that $(M_{1}\cup M_{2})\cap E(\mathcal{T}_{(j,j+1)})$ induces two disjoint paths of equal length (see Figure \ref{figure remark2}) whose union contains all the vertices of $\mathcal{T}_{j}$ and $\mathcal{T}_{j+1}$. Let $Q$ be one of these paths. We first note that $Q$ contains exactly one vertex from $\{u_{j},v_{j+1}\}$ and exactly one vertex from $\{u_{j+3},v_{j+2}\}$. If $\mathcal{T}_{(j,j+1)}$ is symmetric with respect to $M_{1}$, then $Q$ contains $u_{j}$ if and only if $Q$ contains $u_{j+3}$. Otherwise, if $\mathcal{T}_{(j,j+1)}$ is asymmetric with respect to $M_{1}$, then $Q$ contains $u_{j}$ if and only if $Q$ contains $v_{j+2}$.

\begin{figure}[h!]
\centering
\includegraphics[width=1\textwidth]{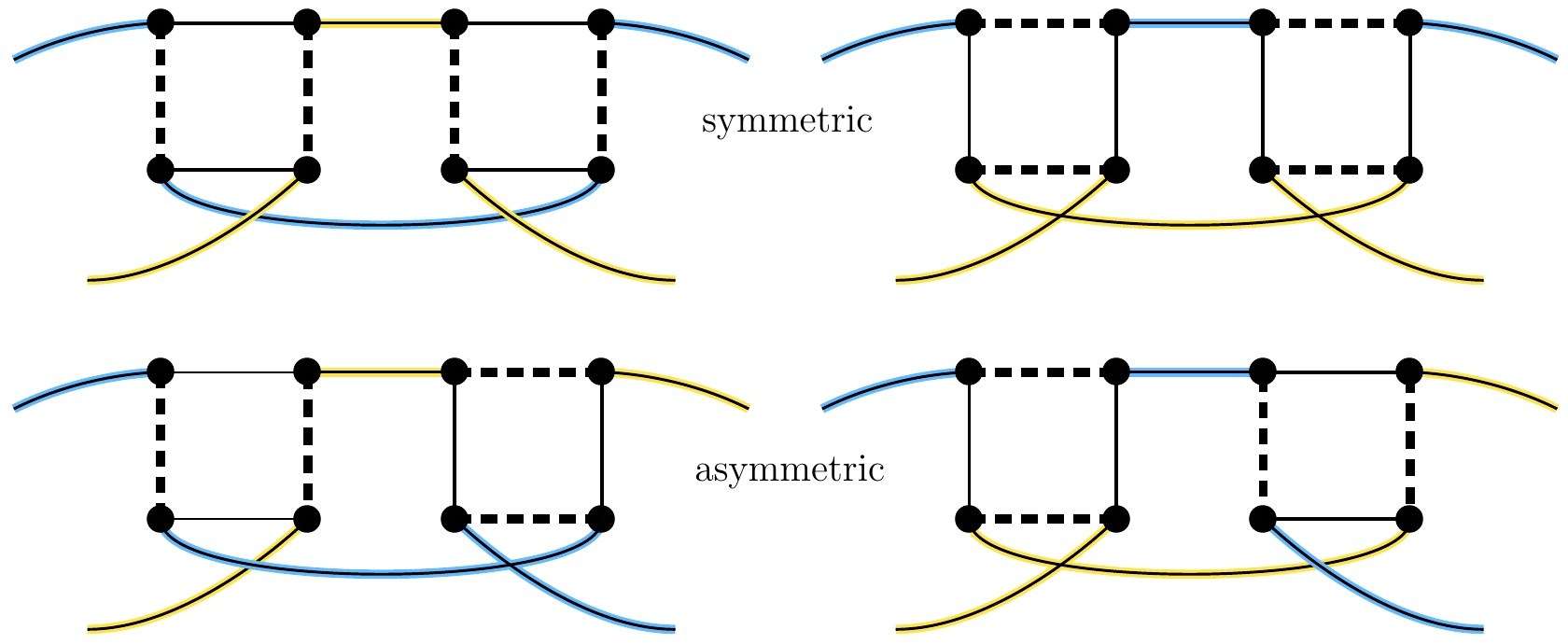}
\caption{$2$-chains when $M_{1}\cap \mathcal{X}=\emptyset$ and $|M_{2}\cap \mathcal{X}|=4$ (bold dashed edges belong to $M_{1}$ and highlighted edges to $M_{2}$)}
\label{figure remark2}
\end{figure}

\end{remark}

\begin{theorem}\label{theorem papillon pmh}
Let $n$ be a positive even integer. Then, the balanced papillon graph $\mathcal{P}_{n}$ is PMH.
\end{theorem}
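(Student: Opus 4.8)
The plan is to take an arbitrary perfect matching $M$ of $\mathcal{P}_n$ and, since $\mathcal{X}$ is a $4$-edge-cut, to split the argument according to $|M\cap\mathcal{X}|\in\{0,2,4\}$ (the only possibilities by Lemma \ref{lemma m cap x}). The case $|M\cap\mathcal{X}|=2$ is immediate: regarding $M$ as the matching $M_1$ of Lemma \ref{lemma complementary}, part~(i) produces a disjoint matching $M_2$ with $|M_2\cap\mathcal{X}|=2$, and by part~(ii) the complementary $2$-factor $E(\mathcal{P}_n)\setminus M_2$ is a Hamiltonian cycle; since $M\cap M_2=\emptyset$ this cycle contains $M$, so $M$ extends. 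This case needs no hypothesis on $n$.

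For $|M\cap\mathcal{X}|=0$, Lemma \ref{lemma m cap x} forces $|M\cap\partial\mathcal{T}_j|=0$ for every $j$, so on each pole $M$ is one of two types: the two spokes, or the within-pole outer and inner edge. As $n$ is even I would pair the poles of each $n$-chain into consecutive $2$-chains $\mathcal{T}_{(2i-1,2i)}$ --- this pairing is exactly what requires $n$ even, and is the positive counterpart of the obstruction in Proposition \ref{prop papillon not pmh}. Each $2$-chain is now symmetric or asymmetric with respect to $M$, and I would choose the partner $N$ by the parities of the number of asymmetric $2$-chains in the two $n$-chains. If some $n$-chain has an odd number of asymmetric $2$-chains, I would take $N$ to be the unique matching with $|N\cap\mathcal{X}|=4$ and use Remark \ref{remark m cap x 4}; if both numbers are even, I would instead build an $N$ with $|N\cap\mathcal{X}|=2$, meeting $\mathcal{X}$ in $\{a,d\}$ or $\{b,c\}$ by Corollary \ref{cor ad or bc}, and use Remark \ref{remark m cap x 2}. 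For even $n$ these two alternatives are complementary and exhaustive.

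To verify that $M\cup N$ is a single Hamiltonian cycle in the first alternative, I would contract the two $n$-chains to boxes $R,L$ with four terminals apiece; by Remark \ref{remark m cap x 4} a box routes its two strands straight or crossed according to the parity of its asymmetric $2$-chains (each asymmetric $2$-chain transposing the upper and lower strands), and reconnecting the eight terminals through $a,b,c,d$ produces one cycle precisely when not both boxes route straight. Here the papillon crossing is decisive: $a,d$ join $R$ to $L$ directly whereas $b,c$ join them with a reversal, so the $8$-terminal trace closes into a single $8n$-cycle exactly when some box is crossed, which holds by the choice of $N$. In the second alternative I would instead concatenate the single paths given by Remark \ref{remark m cap x 2}, one per $2$-chain; now the evenness of $n$ re-enters through the mere existence of such an $N$, for the boundary edges $b,c$ force each $n$-chain to contain an even number of poles of the outer-inner type, a demand compatible with ``both numbers even'' only when $n$ is even. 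The case $|M\cap\mathcal{X}|=4$ is the first computation with the roles of $M$ and $N$ interchanged: $M$ is then the unique all-between-pole matching and $N$ a per-pole matching chosen so that some box is crossed, and this needs no parity assumption on $n$.

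I expect the crux to be exactly this connectivity bookkeeping: checking that the selected $N$ closes into one $8n$-cycle rather than splitting, and that the dichotomy on the parities of asymmetric $2$-chains partitions all matchings with $|M\cap\mathcal{X}|=0$ so that, for even $n$, one of the two constructions always applies. All of the subtlety is concentrated at the $\mathcal{X}$-boundary, where the reversal carried by $b,c$ creates a parity defect that cancels only when $n$ is even --- mirroring on the positive side the obstruction that makes $\mathcal{P}_n$ fail to be PMH for odd $n$.
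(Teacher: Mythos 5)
Your proposal is correct and follows essentially the same route as the paper: the three-way split on $|M\cap\mathcal{X}|$, Lemma \ref{lemma complementary} for the case $|M\cap\mathcal{X}|=2$, the pairing of poles into $2$-chains (using $n$ even) with the symmetric/asymmetric parity dichotomy selecting between Remark \ref{remark m cap x 2} and Remark \ref{remark m cap x 4} when $|M\cap\mathcal{X}|=0$, and an explicit per-pole partner matching when $|M\cap\mathcal{X}|=4$. Your ``straight versus crossed box'' bookkeeping at the eight terminals is just a compact rephrasing of the paper's explicit concatenations in Cases 2a and 2b, and it reaches the same conclusion that a single cycle forms exactly when not both chains route straight.
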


\begin{proof}
Let $M_{1}$ be a perfect matching of $\mathcal{P}_{n}$. We need to show that there exists a perfect matching $M_{2}$ of $\mathcal{P}_{n}$ such that $M_{1}\cup M_{2}$ induces a Hamiltonian cycle of $\mathcal{P}_{n}$. Three cases, depending on the intersection of $M_{1}$ with the principal 4-edge-cut $\mathcal{X}$ of $\mathcal{P}_{n}$, are considered. If $|M_{1}\cap \mathcal{X}|=2$, then, by Lemma \ref{lemma complementary}, there exists a perfect matching $N$ of $\mathcal{P}_{n}$ such that $|N\cap \mathcal{X}|=2$ and $M_{1}\cap N=\emptyset$. Moreover, the complementary 2-factor of $N$ is a Hamiltonian cycle. Since $M_{1}$ is contained in the mentioned 2-factor, the result follows. When $|M_{1}\cap \mathcal{X}|=4$, we can define $M_{2}$ to be the following perfect matching:
\begin{linenomath}
$$M_{2}=\{u_{1}v_{1},u_{2}v_{2}\}\bigcup \cup_{j=2}^{2n}\{u_{2j-1}u_{2j},v_{2j-1}v_{2j}\}.$$
\end{linenomath}
In fact, $M_{1}\cup M_{2}$ induces the following Hamiltonian cycle: $(u_{1},v_{1},v_{4},\ldots, v_{2n},v_{2n-1}, \linebreak v_{4n-1}, v_{4n},v_{4n-3},\ldots, v_{2n+1},v_{2n+2},v_{2}, u_{2}, u_{3},u_{4},\ldots, u_{4n})$, where $v_{4}$ and $v_{4n-3}$ are respectively followed by $v_{3}$ and $v_{4n-2}$.

What remains to be considered is the case when $|M_{1}\cap \mathcal{X}|=0$. 
Clearly, $|M_{2}\cap \mathcal{X}|$ cannot be zero, because, if so, choosing $M_{2}$ to be disjoint from $M_{1}$, $M_{1}\cup M_{2}$ induces $2n$ disjoint $4$-cycles. Therefore, $|M_{2}\cap \mathcal{X}|$  must be equal to 2 or 4. Let $\mathcal{R}=\{\mathcal{T}_{(1,2)},\ldots, \mathcal{T}_{(n-1,n)}\}$ and  $\mathcal{L}=\{\mathcal{T}_{(n+1,n+2)},\ldots, \mathcal{T}_{(2n-1,2n)}\}$ be the sets of 2-chains within the left and right $n$-chains of $\mathcal{P}_{n}$---namely the right and left $n$-chains each split into $\frac{n}{2}$ $2$-chains. We consider two cases depending on the parity of the number of $2$-chains in $\mathcal{L}$ and $\mathcal{R}$ which are asymmetric with respect to $M_{1}$. Let the function $\Phi:\mathcal{\mathcal{R}}\cup \mathcal{L}\rightarrow\{-1,+1\}$ be defined on the $2$-chains $\mathcal{T}\in\mathcal{R}\cup \mathcal{L}$ such that:
\[
 \Phi(\mathcal{T}) =
  \begin{cases}
  +1 & $if $\mathcal{T}$ is symmetric with respect to $M_{1},\\
  -1 & $otherwise.$
  \end{cases}
\]

\noindent\textbf{Case 1.} $\mathcal{L}$ and $\mathcal{R}$ each have an even number (possibly zero) of asymmetric $2$-chains with respect to $M_{1}$.

We claim that there exists a perfect matching such that its union with $M_{1}$ gives a Hamiltonian cycle of $\mathcal{P}_{n}$. Since the number of asymmetric $2$-chains in $\mathcal{R}$ is even, $\prod_{\mathcal{T}\in\mathcal{R}}\Phi(\mathcal{T})=+1$, and consequently, by appropriately concatenating paths as in Remark \ref{remark m cap x 2}, there exists a path $R$ with endvertices $u_{1}$ and $u_{2n}$ whose vertex set is $\cup_{i=1}^{2n}\{u_{i},v_{i}\}$ such that it contains all the edges in $M_{1}\cap (\cup_{i=1}^{n}E(\mathcal{T}_{i}) )$. We remark that this path intersects exactly one edge of $\{xy\in E(\mathcal{P}_{n}): x\in V(\mathcal{T}_{j}), y\in V(\mathcal{T}_{j+1})\}$, for each $j\in [n-1]$.  By a similar reasoning, since $\prod_{\mathcal{T}\in\mathcal{L}}\Phi(\mathcal{T})=+1$, there exists a path $L$ with endvertices $u_{2n+1}$ and $u_{4n}$ whose vertex set is $\cup_{i=2n+1}^{4n}\{u_{i},v_{i}\}$, such that it contains all the edges in $M_{1}\cap (\cup_{i=n+1}^{2n}E(\mathcal{T}_{i}) )$. Once again, this path intersects exactly one edge of $\{xy\in E(\mathcal{P}_{n}): x\in V(\mathcal{T}_{j}), y\in V(\mathcal{T}_{j+1})\}$, for each $j \in\{n+1,\ldots,2n-1\}$.  These two paths, together with the edges $a$ and $d$ form the required Hamiltonian cycle of $\mathcal{P}_{n}$ containing $M_{1}$, proving our claim. We remark that this shows that there exists a perfect matching $M_{2}$ of $\mathcal{P}_{n}$ such that $M_{2}\cap \mathcal{X}=\{a,d\}$, $M_{1}\cap M_{2}=\emptyset$ and with $M_{1}\cup M_{2}$ inducing a Hamiltonian cycle of $\mathcal{P}_{n}$. One can similarly show that there exists a perfect matching $M_{2}'$ of $\mathcal{P}_n$ such that $M_{2}'\cap \mathcal{X}=\{b,c\}$, $M_{1}\cap M_{2}'=\emptyset$ and with $M_{1}\cup M_{2}'$ inducing a Hamiltonian cycle of $\mathcal{P}_{n}$.\\

\noindent\textbf{Case 2.} One of $\mathcal{L}$ and $\mathcal{R}$ has an odd number of asymmetric $2$-chains with respect to $M_{1}$.

Without loss of generality, assume that $\mathcal{R}$ has an odd number of asymmetric $2$-chains with respect to $M_{1}$, that is, $\prod_{\mathcal{T}\in\mathcal{R}}\Phi(\mathcal{T})=-1$. Let $M_{2}$ be the perfect matching of $\mathcal{P}_{n}$ such that $|M_{2}\cap \mathcal{X}|=4$. We claim that $M_{1}\cup M_{2}$ induces a Hamiltonian cycle of $\mathcal{P}_{n}$. Since $\prod_{\mathcal{T}\in\mathcal{R}}\Phi(\mathcal{T})=-1$, by appropriately concatenating paths as in Remark \ref{remark m cap x 4} we can deduce that $M_{1}\cup M_{2}$ contains the edge set of two disjoint paths $R_{1}$ and $R_{2}$, such that:
\begin{enumerate}[(i)]
\item $|V(R_{1})|=|V(R_{2})|=2n$;
\item $V(R_{1})\cup V(R_{2})=\cup_{i=1}^{2n}\{u_{i},v_{i}\}$;
\item the endvertices of $R_{1}$ are $u_{1}$ and $v_{2n-1}$; and
\item the endvertices of $R_{2}$ are $v_{2}$ and $u_{2n}$.
\end{enumerate}
Next, we consider two subcases depending on the value of $\prod_{\mathcal{T}\in\mathcal{L}}\Phi(\mathcal{T})$. We shall be using the fact that
$\{u_{1}u_{4n}, v_{2n-1}v_{4n-1}, v_{2}v_{2n+2}, u_{2n}u_{2n+1} \}=\{a,b,c,d\} = \mathcal{X} \subset M_2$.\\

\noindent\textbf{Case 2a)} $\prod_{\mathcal{T}\in\mathcal{L}}\Phi(\mathcal{T})=-1$

As above, by Remark \ref{remark m cap x 4}, we can deduce that $M_{1}\cup M_{2}$ contains the edge set of two disjoint paths $L_{1}$ and $L_{2}$, such that:
\begin{enumerate}[(i)]
\item $|V(L_{1})|=|V(L_{2})|=2n$;
\item $V(L_{1})\cup V(L_{2})=\cup_{i=2n+1}^{4n}\{u_{i},v_{i}\}$;
\item the endvertices of $L_{1}$ are $u_{2n+1}$ and $v_{4n-1}$; and
\item the endvertices of $L_{2}$ are $v_{2n+2}$ and $u_{4n}$.
\end{enumerate}
The concatenation of the following paths and edges gives a Hamiltonian cycle of $\mathcal{P}_{n}$ containing $M_{1}$:
\begin{linenomath}
$$R_{1} v_{2n-1}v_{4n-1} L_{1} u_{2n+1}u_{2n} R_{2} v_{2}v_{2n+2} L_{2} u_{4n}u_{1}.$$
\end{linenomath}

\noindent\textbf{Case 2b)} $\prod_{\mathcal{T}\in\mathcal{L}}\Phi(\mathcal{T})=+1$.

Once again, by Remark \ref{remark m cap x 4} we can deduce that $M_{1}\cup M_{2}$ contains the edge set of  two disjoint paths $L_{1}$ and $L_{2}$, such that:
\begin{enumerate}[(i)]
\item $|V(L_{1})|=|V(L_{2})|=2n$;
\item $V(L_{1})\cup V(L_{2})=\cup_{i=2n+1}^{4n}\{u_{i},v_{i}\}$;
\item the endvertices of $L_{1}$ are $u_{2n+1}$ and $u_{4n}$; and
\item the endvertices of $L_{2}$ are $v_{2n+2}$ and $v_{4n-1}$.
\end{enumerate}
The concatenation of the following paths and edges gives a Hamiltonian cycle of $\mathcal{P}_{n}$ containing $M_{1}$:
\begin{linenomath}
$$R_{1} v_{2n-1}v_{4n-1} L_{2} v_{2n+2}v_{2} R_{2} u_{2n}u_{2n+1} L_{1} u_{4n}u_{1}.$$
\end{linenomath}
This completes the proof.
\end{proof}

\subsection{The unbalanced case $r<\ell$ and final remarks}\label{section unbalanced}

By following the proofs in Section \ref{section papillon graphs}, the results obtained for balanced papillon graphs are now extended to unbalanced papillon graphs.

\begin{theorem}\label{pmh unbalanced}
The unbalanced papillon graph $\mathcal{P}_{r,\ell}$ is \textit{PMH} if and only if  $r$ and $\ell$ are both even.
\end{theorem}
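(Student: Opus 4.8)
The strategy is to mirror the structure of the balanced case (Theorem \ref{theorem papillon pmh} and Proposition \ref{prop papillon not pmh}), carefully checking which arguments depend only on the combinatorial skeleton of the chains and which genuinely use the balancedness $r=\ell$. The crucial observation is that the machinery built in Section \ref{section papillon graphs}---Lemma \ref{lemma m cap x}, Corollary \ref{cor ad or bc}, Corollary \ref{cor 2 pm and c4 poles}, and the notions of symmetric/asymmetric $2$-chains---is really a statement about a \emph{chain} of $C_4$-poles terminating in the principal $4$-edge-cut, and does not care whether the two chains have the same length. Thus I would first re-examine each of these auxiliary results and confirm that, for a perfect matching $M$ of $\mathcal{P}_{r,\ell}$, we still have $|M\cap\mathcal{X}|\in\{0,2,4\}$, that $|M\cap\partial\mathcal{T}_j|=|M\cap\mathcal{X}|$ for every $j\in[r+\ell]$ (the parity/propagation argument along each chain is identical), and that when $|M\cap\mathcal{X}|=2$ the cut is met in $\{a,d\}$ or $\{b,c\}$.

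\textbf{The ``if'' direction ($r,\ell$ even $\Rightarrow$ PMH).} Given an arbitrary perfect matching $M_1$, I would split into the three cases $|M_1\cap\mathcal{X}|=2,4,0$ exactly as in Theorem \ref{theorem papillon pmh}. The case $|M_1\cap\mathcal{X}|=2$ carries over verbatim via the analogue of Lemma \ref{lemma complementary}, since the Hamiltonian cycle produced there is a single traversal down the right chain, across the cut, up the left chain, and back, and its length adapts automatically to $r$ and $\ell$. The case $|M_1\cap\mathcal{X}|=4$ requires writing down the explicit complementary matching and its induced Hamiltonian cycle with the two chain-lengths as parameters, a routine but notationally heavier rewrite. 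The substantive case is $|M_1\cap\mathcal{X}|=0$: here I would partition the right $r$-chain into $r/2$ consecutive $2$-chains and the left $\ell$-chain into $\ell/2$ consecutive $2$-chains (this is exactly where evenness of \emph{both} $r$ and $\ell$ is used---it lets each chain be tiled by $2$-chains), define the parity product $\Phi$ over each family, and then reproduce Case 1 and Case 2 of Theorem \ref{theorem papillon pmh}. The concatenation arguments using Remark \ref{remark m cap x 2} (when $|M_2\cap\mathcal{X}|=2$) and Remark \ref{remark m cap x 4} (when $|M_2\cap\mathcal{X}|=4$) depend only on the local structure of a $2$-chain, so they transfer; one only re-indexes the endvertices at the ends of the two chains.

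\textbf{The ``only if'' direction (PMH $\Rightarrow$ $r,\ell$ even).} This is where I expect the main obstacle, and where the unbalanced case genuinely differs. I would argue the contrapositive: if $r$ is odd (recall $r\leq\ell$), exhibit a perfect matching that cannot be extended to a Hamiltonian cycle. The natural candidate is the analogue of the matching $M=\cup\{u_{2i-1}u_{2i},v_{2i-1}v_{2i}\}$ from Proposition \ref{prop papillon not pmh}, for which $M\cap\mathcal{X}=\emptyset$ and every $2$-chain is symmetric; tracing the forced edges of any $2$-factor containing $M$ should again split the graph into either two large cycles or many $4$-cycles, never a single Hamiltonian cycle. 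The delicate point is the parity: when exactly one of $r,\ell$ is odd, the right $r$-chain can no longer be tiled by $2$-chains, so the ``even number of asymmetric $2$-chains'' bookkeeping of Case 1 breaks down, and one must show that \emph{no} choice of $M_2$ repairs the obstruction caused by the leftover single $C_4$-pole. I would handle this by a careful parity count: any Hamiltonian cycle complementary to such an $M_1$ must, by the propagation lemma, cross each inter-pole boundary an odd number of times, and summing these parities around the cut $\mathcal{X}$ yields a contradiction precisely when $r$ (equivalently the number of poles in one chain) is odd. Making this parity argument airtight---rather than merely appealing to the balanced analogue---is the crux of the theorem, since for $r\neq\ell$ one can no longer invoke the symmetry automorphism $\psi$ to interchange the two chains and must track the two chains independently.
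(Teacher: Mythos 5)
Your proposal follows essentially the same route as the paper: the authors likewise dispose of this theorem by asserting that the machinery of Lemma~\ref{lemma m cap x} through Theorem~\ref{theorem papillon pmh} carries over chain-by-chain to the unbalanced case, and that for the ``only if'' direction the perfect matching $\cup_{i=1}^{r+\ell}\{u_{2i-1}u_{2i},v_{2i-1}v_{2i}\}$ cannot be extended to a Hamiltonian cycle whenever at least one of $r$ and $\ell$ is odd. One small caution: your closing parity heuristic (``cross each inter-pole boundary an odd number of times'') is backwards, since a cycle meets every edge cut an even number of times; what actually closes the ``only if'' direction is the forced-edge propagation of Proposition~\ref{prop papillon not pmh} combined with the case analysis $|M_2\cap\mathcal{X}|\in\{0,2,4\}$ from the analogue of Lemma~\ref{lemma m cap x}, which you already list as your first line of attack.
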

\begin{proof}
This is an immediate consequence of Proposition \ref{prop papillon not pmh} and Theorem \ref{theorem papillon pmh}. In particular, when at least one of $r$ and $\ell$ is odd, $\mathcal{P}_{r,\ell}$ is not PMH because the perfect matching $\cup_{i=1}^{r+\ell}\{u_{2i-1}u_{2i},v_{2i-1}v_{2i}\}$ of $\mathcal{P}_{r,\ell}$ (illustrated in Figure \ref{figure unbalanced}) cannot be extended to a Hamiltonian cycle.
\end{proof}

\begin{corollary}
The papillon graph $\mathcal{P}_{r,\ell}$ is PMH if and only if $r$ and $\ell$ are both even.
\end{corollary}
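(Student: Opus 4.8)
The plan is to combine the two cases already resolved in the excerpt — the balanced case ($r=\ell$) and the unbalanced case ($r<\ell$) — into a single statement covering all papillon graphs. First I would observe that, by the convention fixed in the definition, we may assume $r\leq\ell$ without loss of generality, since $\mathcal{P}_{r,\ell}\simeq\mathcal{P}_{\ell,r}$; thus it suffices to treat the balanced case $r=\ell$ and the strictly unbalanced case $r<\ell$ separately, and these together exhaust all possibilities.

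For the forward direction (PMH $\Rightarrow$ $r,\ell$ both even), and its contrapositive, I would split on whether $\mathcal{P}_{r,\ell}$ is balanced. If $r=\ell=n$, then Proposition \ref{prop papillon not pmh} tells us $\mathcal{P}_n$ is not PMH when $n$ is odd, while Theorem \ref{theorem papillon pmh} tells us $\mathcal{P}_n$ is PMH when $n$ is even; since $r=\ell$, the condition ``$r$ and $\ell$ both even'' collapses to ``$n$ even,'' so the equivalence holds in the balanced case. If $r<\ell$, the equivalence is exactly the content of Theorem \ref{pmh unbalanced}. Hence in either case $\mathcal{P}_{r,\ell}$ is PMH if and only if both $r$ and $\ell$ are even.

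Since every logical ingredient is already established, this corollary is genuinely a one-line assembly: there is no real obstacle, only the bookkeeping of checking that the balanced and unbalanced sub-statements, taken together, cover every pair $(r,\ell)$ and that their conclusions agree with the uniform statement ``$r$ and $\ell$ both even.'' The only point requiring a moment's care is confirming that the balanced results (Proposition \ref{prop papillon not pmh} and Theorem \ref{theorem papillon pmh}), which are phrased in terms of the single parameter $n$, do indeed yield the two-parameter parity condition when $r=\ell=n$ — but this is immediate. I would therefore write the proof as a short deduction citing Proposition \ref{prop papillon not pmh}, Theorem \ref{theorem papillon pmh}, and Theorem \ref{pmh unbalanced}, noting the reduction to $r\leq\ell$ at the outset.
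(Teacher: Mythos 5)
Your proof is correct and matches the paper's (implicit) argument exactly: the corollary is just the assembly of the balanced case (Proposition \ref{prop papillon not pmh} and Theorem \ref{theorem papillon pmh}) with the unbalanced case (Theorem \ref{pmh unbalanced}), using the convention $r\leq\ell$ justified by $\mathcal{P}_{r,\ell}\simeq\mathcal{P}_{\ell,r}$. The paper states the corollary without a separate proof precisely because it is this one-line deduction.
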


\begin{figure}[h]
\centering
\includegraphics[width=1\textwidth]{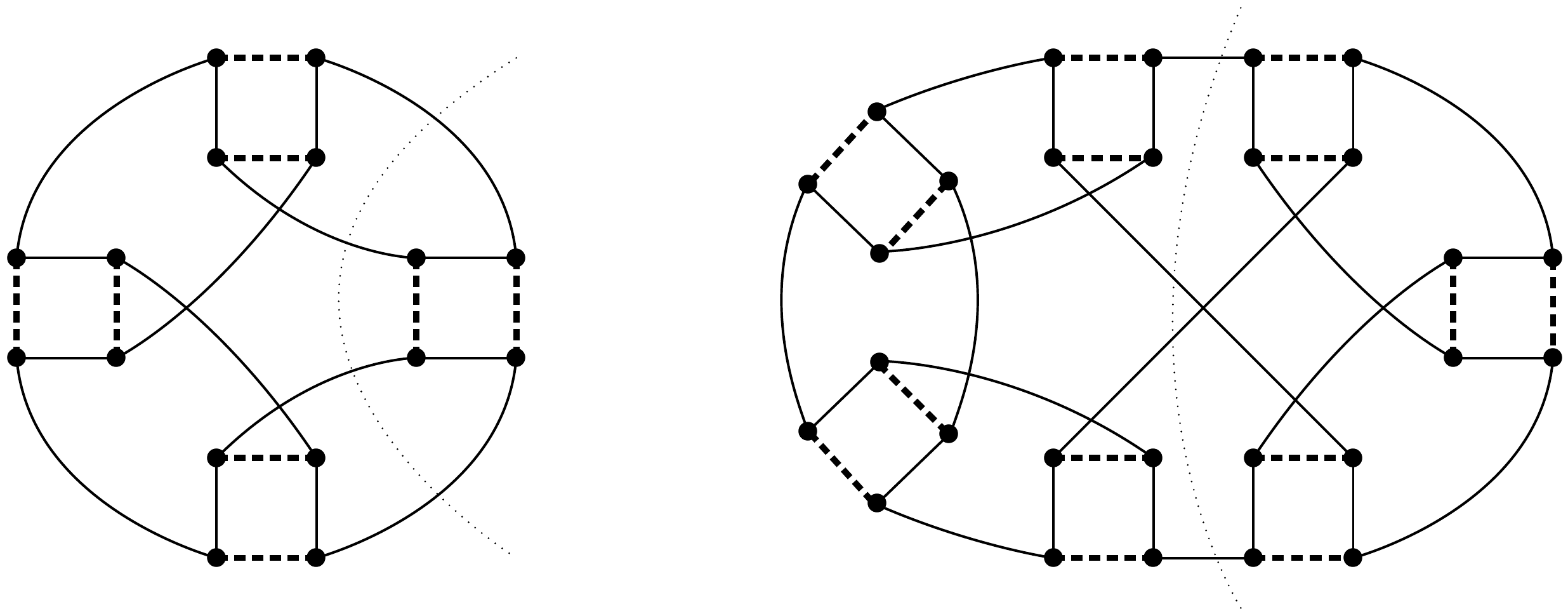}
\caption{$\mathcal{P}_{1,3}$ and $\mathcal{P}_{3,4}$: unbalanced papillon graphs are not always PMH. The above perfect matchings do not extend to a Hamiltonian cycle.}
\label{figure unbalanced}
\end{figure}

Finally, we remark that since $\mathcal{P}_{n}$ is PMH for every even $n\in\mathbb{N}$, balanced papillon graphs provide us with examples of non-bipartite PMH cubic graphs which are cyclically 4-edge-connected and have girth 4 such that their order is a multiple of $16$. Additionally, by considering unbalanced papillon graphs, say $\mathcal{P}_{2,\ell}$, for some even $\ell>2$, we can obtain non-bipartite PMH cubic graphs having the above characteristics (that is, cyclically 4-edge-connected and having girth $4$) such that their order is $8\nu$, for odd $\nu\geq 3$. 

It would also be very compelling to see whether there exist other $4$-poles instead of the $C_{4}$-poles that can be used as building blocks when constructing papillon graphs and which yield non-bipartite PMH or just E2F cubic graphs.

\end{document}